 \newtheorem{thm}{Theorem}
 \newtheorem{cor}[thm]{Corollary}
 \newtheorem{lem}[thm]{Lemma}
 \theoremstyle{definition}
 \newtheorem{defn}[thm]{Definition}
 \theoremstyle{definition}
 \theoremstyle{remark}
 \newtheorem{rem}[thm]{Remark}
 \theoremstyle{definition}
 \newtheorem{example}[thm]{Example}
 \numberwithin{thm}{section}
 \numberwithin{equation}{section}
 \newcommand{\Hom}{\mathrm{Hom}}
 \newcommand{\ord}{\mathrm{ord}}
 \newcommand{\GL}{\mathrm{GL}}
 \newcommand{\PGL}{\mathrm{PGL}}
 \newcommand{\SL}{\mathrm{SL}}
\newcommand{\sT}{\mathscr{T}}
\newcommand{\sD}{\mathscr{D}} 
\newcommand{\sB}{\mathscr{B}} 
 \newcommand{\fp}{\mathfrak p}
 \newcommand{\fq}{\mathfrak q}
 \newcommand{\fn}{\mathfrak n}
 \newcommand{\fm}{\mathfrak m}
 \newcommand{\fd}{\mathfrak d}
 \newcommand{\cO}{\mathcal{O}}
 \newcommand{\cX}{\mathcal{X}}
 \newcommand{\cM}{\mathcal{M}}
 \newcommand{\cJ}{\mathcal{J}}
 \newcommand{\cI}{\mathcal{I}}
 \newcommand{\R}{\mathbb{R}}
 \newcommand{\F}{\mathbb{F}}
 \newcommand{\Q}{\mathbb{Q}}
 \newcommand{\Z}{\mathbb{Z}}
 \newcommand{\M}{\mathbb{M}}
 \newcommand{\p}{\mathbb{P}}
 \newcommand{\Nr}{\mathrm{Nr}}
 \newcommand{\To}{\longrightarrow}
 \newcommand{\bs}{\setminus}
 \newcommand{\Fi}{F_\infty}
 \newcommand{\G}{\Gamma}
 \newcommand{\la}{\lambda}
\begin{document}

\title[On component groups of Jacobians of quaternionic modular curves]{On component groups of Jacobians of quaternionic modular curves}

\author{Mihran Papikian}
\address{Department of Mathematics, Pennsylvania State University, University Park, PA 16802, U.S.A.}
\email{papikian@psu.edu}

\thanks{The author's research was partially supported by grants from the Simons Foundation (245676) and the National Security Agency 
(H98230-15-1-0008).} 

\dedicatory{Dedicated to Ernst-Ulrich Gekeler}

\subjclass[2010]{11G18, 05C25,  11F12} 
\keywords{Graph Laplacian; Component group; Shimura curve.}



\begin{abstract} We use a combinatorial result relating the discriminant of 
the cycle pairing on a weighted finite graph to the eigenvalues of its Laplacian, to deduce 
a formula for the orders of component groups of Jacobians of modular curves 
arising from quaternion algebras over $\F_q(T)$ or $\Q$. Our formula over $\Q$ 
recovers a result of Jordan and Livn\'e. 
\end{abstract}


\maketitle

\section{Introduction}\label{sIntro}

Let $\F_q$ be a finite field with $q$ 
elements, where $q$ is a power of a prime number $p$. Let $A=\F_q[T]$ 
be the ring of polynomials in the indeterminate $T$ with coefficients 
in $\F_q$, and $F=\F_q(T)$ be the fraction field of $A$. 
The degree map $\deg: F\to \Z\cup \{-\infty\}$, which assigns 
to a non-zero polynomial its degree in $T$ and $\deg(0)=-\infty$, is a valuation of $F$. 
The corresponding place of $F$ will be denoted by $\infty$. 
Apart from $\infty$, the places of $F$ are in bijection with the non-zero prime ideals of $A$. Given a place $v$ of $F$, 
we denote by $F_v$ the completion of $F$ at $v$, and by $\F_v$ the residue field of $F_v$. 

Let $\sB$ be a division quaternion algebra over $F$, which is ``indefinite'' in the sense that $\sB\otimes_F \Fi\cong \M_2(\Fi)$.  
Let $\fd$ be the (reduced) discriminant of $\sB$; cf. \cite[p. 58]{Vigneras}.  
This is the product of primes $\fp\lhd A$ where $\sB$ ramifies, i.e., $\sB\otimes_F F_\fp$ 
is a division algebra over $F_\fp$. It is known from the theory of quaternion algebras that $\fd$ 
is a product of an even number of primes, and, up to isomorphism, $\fd$ uniquely determines $\sB$; cf. \cite{Vigneras}.  
(Moreover, any square-free $\fd$ with an even number of prime divisors is the discriminant of some indefinite quaternion algebra over $F$.)

There is a smooth projective curve $X^\fd$ over $F$ associated with the quaternion algebra $\sB$. 
It is the coarse moduli scheme of the so-called $\sD$-elliptic sheaves; roughly, a \textit{$\sD$-elliptic sheaf} 
is a vector bundle equipped with a meromorphic Frobenius and an action of $\sB$; we refer to \cite{LRS} 
for the precise definitions. The curve $X^\fd$ has bad reduction exactly at the primes dividing $\fd$ and $\infty$; cf. \cite{LRS}. 
Let $J^\fd$ be the Jacobian variety of $X^\fd$, and $\cJ^\fd$ be the N\'eron model of $J^\fd$ over $A$.  
Let $\fp$ be a prime dividing $\fd$. The main result of this paper is a formula for the order 
of the component group $\Phi_{J^\fd, \fp}=(\cJ^\fd\otimes \overline{\F}_\fp)/(\cJ^\fd\otimes \overline{\F}_\fp)^0$. 
In order to state this formula, we need to introduce more notation. 

The order of a finite set $S$ will be denoted by $|S|$. We define norm and degree for a non-zero ideal $\fn$ of $A$ 
by $|\fn|:=|A/\fn|$ and $\deg(\fn):=\log_q|\fn|$. 
The prime ideals of $A$ will always be assumed to be non-zero. 
Let $\sB'$ be the ``definite'' quaternion algebra over 
$F$ which is ramified at $\infty$ and at the primes $\fp_1, \dots, \fp_s$ appearing in the prime decomposition of 
$$\fd':=\fd/\fp=\prod_{i=1}^s \fp_i.$$ 
Fix a maximal $A$-order $\cM$ in $\sB'$. 
Let $h(\fd')$ be the class number of $\sB'$, that is, the number of left
$\cM$ ideal classes. It is known that $h(\fd')$ is
finite and independent of the choice of $\cM$. Let $I_1, I_2,\dots, I_{h(\fd')}$ be representatives of 
the left ideal classes of $\cM$, and let $\cM_i$ be the right order of $I_i$, $1\leq i\leq h(\fd')$. 
It is not hard to show that each $\cM_i^\times$ is isomorphic either to $\F_q^\times$ or $\F_{q^2}^\times$;  
thus, $w_i:=|\cM_i^\times/A^\times|$ is either equal to $1$ or $q+1$; see \cite[p. 383]{DvG}. 
It is known that (cf. \cite[(1)]{DvG})  
$$
m(\fd'):=\sum_{i=1}^{h(\fd')} w_i^{-1}=\frac{1}{q^2-1}\prod_{i=1}^s(|\fp_i|-1); 
$$  
$m(\fd')$ is called the \textit{mass} of $\sB'$. If we denote by $h_{q+1}$ the number of $w_i$'s equal to $q+1$, then by 
(1), (4) and (6) in \cite{DvG}:
$$
h_{q+1}=\frac{1}{2}\prod_{i=1}^s\left(1-(-1)^{\deg(\fp_i)}\right). 
$$
This gives a formula for the class number:
$$
h(\fd')=m(\fd')+h_{q+1}\frac{q}{q+1}. 
$$

\begin{defn} Denote $M_{ij}:=I_j^{-1}I_i$. Let 
$m_{ij}$ be a generator of the ideal $\{\Nr(\alpha)\ |\ \alpha\in M_{ij}\}\lhd A$, where $\Nr: \sB'\to F$ is the reduced norm.
Let $\fm\lhd A$ be a non-zero ideal. Let 
$$
b_{ij}(\fm)=\frac{\#\{\beta\in M_{ij}\ |\ (\Nr(\beta)/m_{ij})=\fm\}}{(q-1)w_j}. 
$$
The \textit{$\fm$-th Brandt matrix} of  $\sB'$ is $B(\fm)=\left(b_{ij}(\fm)\right)_{1\leq i,j\leq h(\fd')}\in \M_{h(\fd')}(\Z)$. 
\end{defn}

Let $P(x)$ be the characteristic polynomial of the Brandt matrix $B(\fp)$. Denote by $P'(x)$ 
the derivative of $P(x)$.  
Let 
$$
n(\fd)=\left(1-(-1)^{\deg(\fp)}\right) h_{q+1}
= \frac{1}{2}\prod_{\substack{\fq|\fd \\ \fq\text{ is prime}}}\left(1-(-1)^{\deg(\fq)}\right). 
$$

\begin{thm}\label{corJ^d} We have:
$$
|\Phi_{J^\fd, \fp}| =\frac{\left|P(-|\fp|-1)\cdot P'(|\fp|+1)\right|}{2m(\fd')(q+1)^{n(\fd)}}.
$$
\end{thm}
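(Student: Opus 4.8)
The plan is to reduce the computation of $|\Phi_{J^\fd,\fp}|$ to a purely combinatorial problem on the dual graph of the special fibre of $X^\fd$ at $\fp$, and then to apply the combinatorial comparison between the cycle pairing and the Laplacian spectrum. First I would invoke the Cherednik--Drinfeld-type $\fp$-adic uniformization of $X^\fd$: since $\fp\mid\fd$, the curve $X^\fd\otimes_F F_\fp$ is a Mumford curve, uniformized by Drinfeld's upper half-plane $\Omega_\fp$ by a discrete group $\Gamma$ manufactured from the ``interchanged'' quaternion algebra $\sB'$ (ramified at $\infty$ and at the primes of $\fd'=\fd/\fp$, and split at $\fp$, so that $\sB'^\times$ acts on the Bruhat--Tits tree $\cT$ of $\PGL_2(F_\fp)$). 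Consequently the special fibre over $\overline{\F}_\fp$ is a union of rational curves, the reduction is totally (split) toric, and its dual graph is $\cG=\Gamma\backslash\cT$. The key identification is that $\cG$ is the weighted Brandt graph attached to $\sB'$: it is bipartite, with the two sides each indexed by the $h(\fd')$ left ideal classes of $\cM$ (the Frobenius $\Frob_\fp$ interchanging the two sides), with vertex $i$ carrying the weight $w_i=|\cM_i^\times/A^\times|\in\{1,q+1\}$, and with the number of edges joining the two sides recorded by the Brandt matrix $B(\fp)$.

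Next I would pass from geometry to combinatorics. Because the reduction is totally toric, Grothendieck's theory of the monodromy pairing (Raynaud) identifies $\Phi_{J^\fd,\fp}$ with the cokernel of the monodromy/Laplacian pairing on $H_1(\cG,\Z)$; in particular $|\Phi_{J^\fd,\fp}|$ equals the discriminant of the cycle pairing on $\cG$, computed with edge lengths read off from the stabilizer orders in $\Gamma$. It then remains to evaluate this discriminant, for which I would apply the paper's combinatorial result expressing the discriminant of the cycle pairing on a weighted finite graph through the nonzero eigenvalues of its Laplacian (a weighted Kirchhoff / matrix--tree identity). The Laplacian of the bipartite graph $\cG$ on $2h(\fd')$ vertices is $L=(|\fp|+1)I-A$ with $A=\left(\begin{smallmatrix}0&B(\fp)\\ B(\fp)&0\end{smallmatrix}\right)$, using that each row of $B(\fp)$ sums to $|\fp|+1$ (the number of ideals of norm $\fp$). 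Writing $\lambda_1=|\fp|+1,\lambda_2,\dots,\lambda_{h(\fd')}$ for the eigenvalues of $B(\fp)$, the eigenvalues of $A$ are $\{\pm\lambda_i\}$, so the nonzero spectrum of $L$ is $\{(|\fp|+1)-\lambda_i\}_{i\ge 2}\cup\{(|\fp|+1)+\lambda_i\}_{i\ge 1}$. Hence the product of the nonzero eigenvalues of $L$ factors as
$$\det{}'(L)=\prod_{i\ge 2}\bigl((|\fp|+1)-\lambda_i\bigr)\cdot\prod_{i\ge 1}\bigl((|\fp|+1)+\lambda_i\bigr)=\bigl|P'(|\fp|+1)\cdot P(-|\fp|-1)\bigr|,$$
since $P'(|\fp|+1)=\prod_{i\ge2}((|\fp|+1)-\lambda_i)$ (as $\lambda_1$ is a simple root, $\cG$ being connected) and $\prod_{i\ge1}((|\fp|+1)+\lambda_i)=|P(-|\fp|-1)|$. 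This produces exactly the numerator of the asserted formula.

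Finally I would pin down the normalization constant. The weighted matrix--tree theorem divides $\det'(L)$ by the total mass of the graph, which is where $m(\fd')=\sum_i w_i^{-1}$ enters; the factor $2$ reflects the bipartite doubling induced by $\Frob_\fp$ (the two $\Gamma$-orbits of vertex types in $\cT$); and the power $(q+1)^{n(\fd)}$ accounts for the vertices with nontrivial stabilizers, i.e.\ the indices with $w_i=q+1$, whose count is governed by $h_{q+1}$ and hence by $n(\fd)$. Assembling these contributions yields
$$|\Phi_{J^\fd,\fp}|=\frac{\bigl|P(-|\fp|-1)\cdot P'(|\fp|+1)\bigr|}{2\,m(\fd')\,(q+1)^{n(\fd)}}.$$
The main obstacle is precisely this last bookkeeping: transporting the arithmetic stabilizer data (the weights $w_i$ and the $\Frob_\fp$-action) through the monodromy pairing into the correctly weighted Laplacian, so that the combinatorial identity outputs the denominator $2\,m(\fd')\,(q+1)^{n(\fd)}$ rather than a na\"ive vertex count. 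Establishing the uniformization and the clean identification of $\cG$ with the Brandt graph in the $\sD$-elliptic-sheaf setting, with all weights correct, is the technically delicate input on which everything rests.
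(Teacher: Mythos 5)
Your overall architecture is exactly the paper's: Hausberger's analogue of the Cherednik--Drinfeld uniformization identifies the dual graph of a semistable model of $X^\fd$ at $\fp$ with the bipartite weighted graph $G_+=\G_+\bs\sT$ (the double of the Brandt graph, with the two sides swapped by the twisting involution); Grothendieck's theorem (Theorem \ref{thmGroth}, from \cite{SGA7}) converts $|\Phi_{J^\fd,\fp}|$ into the discriminant $D(G_+)$ of the weighted cycle pairing; and the eigenvalue formula of Theorem \ref{thmMyDelta} together with the bipartite $\pm$-symmetry of the spectrum (Lemma \ref{lempmeigenvalues}, Corollary \ref{corPpolynomial}) produces the numerator $\left|P(-|\fp|-1)P'(|\fp|+1)\right|$ and the mass factor $2m(\fd')$, since $m(G_+)=2m(\fd')$. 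One point you elide: $X^\fd_{F_\fp}$ is only a \emph{twisted} Mumford curve, so the semistable model with dual graph $G_+$ lives over the quadratic unramified extension $\cO_\fp^{(2)}$; this is harmless because $\Phi_{J^\fd,\fp}$ is defined over $\overline{\F}_\fp$, but it is why the correct graph is $G_+$ (with $2h(\fd')$ vertices) rather than $G_0=\G_0\bs\sT$.

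The genuine gap is in your final paragraph, where the denominator is assembled. You attribute the factor $(q+1)^{n(\fd)}$ to ``the vertices with nontrivial stabilizers.'' That cannot be right: $G_+$ has exactly $2h_{q+1}$ vertices of weight $q+1$ \emph{regardless} of $\fp$, whereas $n(\fd)=\left(1-(-1)^{\deg(\fp)}\right)h_{q+1}$ vanishes when $\deg(\fp)$ is even. In Theorem \ref{thmMyDelta} the weights enter through the ratio $\prod_{e\in E(G_+)^\ast}w(e)\big/\prod_{v\in V(G_+)}w(v)$, so one must also compute the \emph{edge} weights, i.e.\ the orders of the stabilizers in $\G_+$ of edges of $\sT$; these are unit groups of Eichler orders modulo $A^\times$, hence $1$ or $q+1$. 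The paper's Lemma \ref{lemKurihara} shows that each weight-$(q+1)$ vertex has exactly $1+(-1)^{\deg(\fp)}$ incident edges of weight $q+1$, proved by counting fixed points of a generator of $\F_{q^2}^\times$, viewed inside $\GL_2(\F_q)$, acting on $\p^1(\F_\fp)$: there are none when $\deg(\fp)$ is odd and two when $\deg(\fp)$ is even. This yields $\prod_{e\in E(G_+)^\ast}w(e)\big/\prod_{v\in V(G_+)}w(v)=(q+1)^{h_{q+1}\left(1+(-1)^{\deg(\fp)}\right)-2h_{q+1}}=(q+1)^{-n(\fd)}$. Following your reasoning literally (vertex weights only) would give $(q+1)^{-2h_{q+1}}$, which is wrong precisely when $\deg(\fp)$ is even, where the edge weights exactly cancel the vertex weights. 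So the step you flag as the ``main obstacle'' is indeed the missing content, and the heuristic you offer to resolve it produces an incorrect exponent.
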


This formula is the function field analogue of a result of Jordan and Livn\'e \cite{JLComGrp} for Shimura curves 
over $\Q$. In \cite{JLComGrp}, for a prime $p$ 
dividing the discriminant $d$ of an indefinite quaternion algebra, the authors use the 
Cherednik-Drinfeld $p$-adic uniformization of the corresponding Shimura curve $X^d$ to construct a regular model $\cX^d$
of $X^d$ over $\Z_p$. Then, they apply a result of Raynaud (cf. \cite[$\S$9.6]{NM}), reducing  
the problem of computation of the group of connected components $\Phi_{J^d, p}$ of the Jacobian of $X^d$ 
to linear algebra computations with the intersection matrix of the special fibre of $\cX^d$. This approach 
leaves it somewhat mysterious the reason for the appearance of different invariants of the quaternion 
algebra in the formula for $|\Phi_{J^d, p}|$. 

To prove Theorem \ref{corJ^d} we take a different approach, which 
emphasises the combinatorial nature of the formula for $|\Phi_{J^\fd, \fp}| $, although it still relies 
on the function field analogue of the Cherednik-Drinfeld uniformization. In Section \ref{secG&L}, 
we recall a result from \cite{laplacian}, which relates the discriminant of a weighted 
cycle pairing on the first homology group of a graph to the eigenvalues of its Laplacian. 
By a theorem of Grothendieck, given a semistable curve, the discriminant of the cycle pairing on its dual graph 
is the order of the component group of its Jacobian (see Theorem \ref{thmGroth}). 
On the other hand, using the rigid-analytic uniformization of $X^\fd$, one shows that 
the Laplacian of the dual graph of $X^\fd$ over $F_\fp$ is related to a Hecke operator 
with characteristic polynomial $P(x)$. Here we should mention that the dual graph 
of the semistable model of $X^\fd$ that one obtains through the rigid-analytic uniformization 
is naturally weighted and the Hecke operator mentioned above is related to the weighted Laplacian, 
so one has to deal with weighted graphs from the outset. 
The proof of Theorem \ref{corJ^d} then reduces to computing the weights of this dual graph, which is done in Section \ref{ss2Appendix}. 
Our approach applies to Shimura curves over $\Q$ as well. 
In Section \ref{sSC}, we reprove the result of  Jordan and Livn\'e using this method. (Here 
we also use the opportunity to correct a small mistake in the main formula of \cite{JLComGrp}; see Remark \ref{lastrem}.)

\begin{example}
We observe that $h(\fd')=1$ if and only if $\deg(\fd')\leq 2$. Since $\fd'$ is a product of 
an odd number of primes, we conclude that $h(\fd')=1$ is equivalent to $\fd'=\fq$ being a prime 
of degree $\leq 2$. Thus, $\fd=\fp\fq$ and $P(x)=x-(|\fp|+1)$, since $|\fp|+1$ is a root of $P(x)$ for any $\fp$ (cf. \cite[p. 739]{WY}). 
First, assume $\deg(\fq)=1$. In this case, $m(\fd')=(q+1)^{-1}$, and 
$$
n(\fd)=
\begin{cases}
2 & \text{if $\deg(\fp)$ is odd},\\
0 & \text{if $\deg(\fp)$ is even}. 
\end{cases}
$$
Thus, 
$$
|\Phi_{J^\fd, \fp}| =
\begin{cases}
\frac{|\fp|+1}{q+1}, & \text{if $\deg(\fp)$ is odd};\\
(|\fp|+1)(q+1), & \text{if $\deg(\fp)$ is even}. 
\end{cases}
$$
Now assume $\deg(\fq)=2$. In this case, $m(\fd')=1$, $n(\fd)=0$, and 
$$
|\Phi_{J^\fd, \fp}| =|\fp|+1. 
$$
\end{example}

\begin{example} 
Some calculations of Brandt matrices over $\F_q(T)$ were carried out by Schweizer in \cite{SchweizerJNT}. 
For example, he computed that for $q=2$, $\fp=T$, and $\fd'=T^5+T^2+1$ we have 
\begin{align*}
P(x)=(x - &3) (x^2+x-1) \\ 
&\times (x^8+x^7-11x^6-8x^5+38x^4+16x^3-44x^2-4x+4). 
\end{align*}
Therefore, for $\fd=T(T^5+T^2+1)$ and $\fp=T$, we have $h(\fd')=11$, $m(\fd')=31/3$, $n(\fd)=2$ and 
$$
|\Phi_{J^\fd, T}|=\frac{|P(-3)P'(3)|}{2\frac{31}{3}3^{2}}= 5^2\cdot 11\cdot 61\cdot 113. 
$$
Nowadays, Brandt matrix calculations over $\F_q(T)$ are implemented in \texttt{Magma}, which  
makes the calculation of $|\Phi_{J^\fd, \fp}|$ 
fairly routine. Some of the values of $|\Phi_{J^\fd, \fp}|$ are listed in Tables \ref{tablePhip1} and \ref{tablePhip2}. 
\begin{table}
\begin{tabular}{|c|c|}
\hline
$\fp$ & $|\Phi_{J^\fd, \fp}|$\\
\hline\hline
$T-3$ & $2^5$ \\
\hline
$T^2-3$ & $2^{13}\cdot 3^6\cdot 5\cdot 7\cdot 11\cdot 13\cdot 17\cdot 37\cdot 59$ \\
\hline
$T^3+T+1$ & $2^{16}\cdot 3^9\cdot 5\cdot 7\cdot 11\cdot 13\cdot 23\cdot 181$ \\
\hline
\end{tabular}
\caption{$q=5$ and $\fd=T(T-1)(T-2)\fp$}\label{tablePhip1}
\end{table}

\begin{table}
\begin{tabular}{|c|c|}
\hline
$\fp$ & $|\Phi_{J^\fd, \fp}|$\\
\hline\hline
$T$ & $2^{10}\cdot 19^2\cdot 43^3$ \\
\hline
$T-1$ & $2^{12}\cdot 3\cdot 7\cdot 179\cdot 1871$ \\
\hline
$T-3$ & $2^{19}\cdot 5\cdot 103^2$ \\
\hline
$T^2-3$ & $2^{23}\cdot 5^2\cdot 7\cdot 29\cdot 6525499\cdot 13235507$ \\
\hline
$T^3+2$ & $2^{22}\cdot 3^3\cdot 5^6\cdot 7\cdot 13^4\cdot 19^5\cdot 43^7$ \\
\hline
\end{tabular}
\caption{$q=7$ and $\fd=(T^3-2)\fp$}\label{tablePhip2}
\end{table}
\end{example}

Let $X_0(\fd)$ be the Drinfeld modular curve over $F$ classifying rank-$2$ Drinfeld $A$-modules 
with $\G_0(\fd)$-level structures; cf. \cite{GR}. Let $J_0(\fd)$ be the Jacobian variety of $X_0(\fd)$.  
The Jacquet-Langlands correspondence and Zarhin's isogeny theorem over function fields imply 
that there is a surjective homomorphism $J_0(\fd)\to J^\fd$ defined 
over $F$; cf. \cite[Thm. 7.1]{PapikianJL}. Thus, there are functorial homomorphisms $\psi_\fp: \Phi_{J_0(\fd), \fp}\to \Phi_{J^\fd, \fp}$ 
for primes $\fp$ dividing $\fd$, 
where $\Phi_{J_0(\fd), \fp}$ denotes the group of connected components of the N\'eron model of $J_0(\fd)$ at $\fp$. 
The group $\Phi_{J_0(\fd), \fp}$ is well-understood: its complete description as an abelian group 
can be found in \cite[Thm. 5.3]{PW}. In particular, $\Phi_{J_0(\fd), \fp}$ has no elements of $p$-power order and the isomorphism class 
of $\Phi_{J_0(\fd), \fp}$ depends only on the degrees of the 
primes dividing $\fd$ and the number of these primes. We observe from Tables \ref{tablePhip1} and \ref{tablePhip2} that 
these properties do not transfer to $\Phi_{J^\fd, \fp}$, in the sense that $|\Phi_{J^\fd, \fp}|$ can be divisible by the characteristic of the field and 
$|\Phi_{J^\fd, \fp}|$ depends not just on the degree of $\fp$ but on the actual ideal $\fp$. 
Another difference between the groups $\Phi_{J_0(\fd), \fp}$ and $\Phi_{J^\fd, \fp}$ is that the latter is generally a much larger group. 
Indeed, the order of the group $\Phi_{J_0(\fd), \fp}$ grows linearly with $|\fd|$; see \cite[Thm. 5.3]{PW}. On the other hand,  
one can use Theorem \ref{corJ^d} and the arguments in \cite{laplacian} to prove the following estimates: 

\begin{thm} For two positive real valued functions $f(x)$ and $g(x)$ defined on an infinite subset of ideals of $A$, we write 
$f(x)\sim g(x)$ when $\lim_{|x|\to \infty}f(x)/g(x)=1$. We have:  
\begin{enumerate}
\item If $\fd'$ is fixed and $\fp$ varies, then 
$$
\log_q |\Phi_{J^\fd, \fp}|\sim (2h(\fd')-1)\deg(\fp). 
$$
\item If $\fp$ is fixed and $\fd'$ varies, then 
$$
\ln |\Phi_{J^\fd, \fp}|\sim 2h(\fd')\cdot c(\fp), 
$$
where $c(\fp)$ is an explicit constant depending only on $\fp$, which can be estimated as 
$c(\fp)=\ln\left(|\fp|+\frac{1}{2}\right)+O\left(|\fp|^{-2}\deg(\fp)\right)$. 
\end{enumerate}
\end{thm}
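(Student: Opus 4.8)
The plan is to read off both asymptotics directly from the closed formula of Theorem~\ref{corJ^d} by factoring $P(x)=\prod_{i=1}^{h(\fd')}(x-\lambda_i)$ over its roots, with $\lambda_1=|\fp|+1$ the Eisenstein eigenvalue. Since $\fp\nmid\fd'$, the matrix $B(\fp)$ is a Hecke operator at a place where $\sB'$ is unramified, so the Ramanujan bound (Drinfeld's theorem, transported by Jacquet--Langlands) gives $|\lambda_i|\le 2\sqrt{|\fp|}$ for $i\ge 2$; in particular $|\fp|+1$ is a simple root. Then $P'(|\fp|+1)=\prod_{i\ge 2}(|\fp|+1-\lambda_i)$ and $|P(-|\fp|-1)|=2(|\fp|+1)\prod_{i\ge 2}(|\fp|+1+\lambda_i)$, so Theorem~\ref{corJ^d} becomes
$$
|\Phi_{J^\fd,\fp}|=\frac{|\fp|+1}{m(\fd')(q+1)^{n(\fd)}}\prod_{i=2}^{h(\fd')}\big((|\fp|+1)^2-\lambda_i^2\big),
$$
each factor being positive because $|\lambda_i|<|\fp|+1$.

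For (1) I would fix $\fd'$ and let $|\fp|\to\infty$. Then $h(\fd')$, $m(\fd')$ and $n(\fd)\in\{0,2h_{q+1}\}$ are bounded, while $\lambda_i^2/(|\fp|+1)^2\le 4|\fp|/(|\fp|+1)^2\to 0$, so $\log_q\big((|\fp|+1)^2-\lambda_i^2\big)\sim 2\deg(\fp)$ for each of the $h(\fd')-1$ cuspidal eigenvalues, and $\log_q(|\fp|+1)\sim\deg(\fp)$. Summing and discarding the $O(1)$ contributions of $\log_q m(\fd')$ and $n(\fd)\log_q(q+1)$ gives $\log_q|\Phi_{J^\fd,\fp}|\sim(2h(\fd')-1)\deg(\fp)$, which is (1).

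Part (2) is the substantial one: fix $\fp$ and let $|\fd'|\to\infty$, so $h(\fd')\to\infty$. The key input, where I would import the arguments of \cite{laplacian}, is the weak convergence of the empirical spectral measures of the cuspidal eigenvalues,
$$
\frac{1}{h(\fd')}\sum_{i=2}^{h(\fd')}\delta_{\lambda_i}\ \Longrightarrow\ \mu_\fp,
$$
where $\mu_\fp$ is Serre's $\fp$-adic Plancherel measure on $[-2\sqrt{|\fp|},2\sqrt{|\fp|}]$. I would prove this by the method of moments: writing each power $B(\fp)^{2k}$ as an integral combination of the $B(\fp^{j})$ via the Hecke relation $B(\fp)B(\fp^{n})=B(\fp^{n+1})+|\fp|\,B(\fp^{n-1})$, and then invoking the Eichler--Selberg trace formula, whose leading term $\Tr B(\fp^{j})\sim m(\fd')$ survives only for even $j$ (the central, scalar contribution) while the elliptic terms and the single Eisenstein eigenvalue are $o(h(\fd'))$. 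This gives $\frac{1}{h(\fd')}\sum_{i\ge 2}\lambda_i^{2k}\to \widetilde M_{2k}:=\sum_{l=0}^{k}\big(\binom{2k}{l}-\binom{2k}{l-1}\big)|\fp|^{l}$, the even moments of $\mu_\fp$ (the odd ones vanishing by symmetry), and these determine $\mu_\fp$ since all measures share the same compact support. As $\phi(\lambda)=\ln\big((|\fp|+1)^2-\lambda^2\big)$ is continuous and bounded on that support (there $(|\fp|+1)^2-\lambda^2\ge(|\fp|-1)^2>0$), weak convergence yields $\frac{1}{h(\fd')}\sum_{i\ge 2}\phi(\lambda_i)\to\int\phi\,d\mu_\fp$. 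Since $h(\fd')\sim m(\fd')\to\infty$, the terms $\ln(|\fp|+1)$, $\ln m(\fd')$ and $n(\fd)\ln(q+1)=O(2^{s})$ are all $o(h(\fd'))$, giving $\ln|\Phi_{J^\fd,\fp}|\sim 2h(\fd')\,c(\fp)$ with $2c(\fp)=\int\phi\,d\mu_\fp$.

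It then remains to evaluate this explicit constant. Using $(|\fp|+1)^2-\lambda^2=|\fp|\big((|\fp|^{1/2}+|\fp|^{-1/2})^2-x^2\big)$ with $\lambda=|\fp|^{1/2}x$, the integral against $\mu_\fp$ reduces to a one-variable integral that I would evaluate by Fourier expansion of the Poisson-type kernel of $\mu_\fp$ (equivalently, by summing $\sum_{k\ge1}\widetilde M_{2k}/\big(k(|\fp|+1)^{2k}\big)$, whose leading part is the Catalan-type series $\sum_{k}C_k u^k/k$ with $u=|\fp|/(|\fp|+1)^2$); expanding the result in powers of $1/|\fp|$ gives $c(\fp)=\ln(|\fp|+\tfrac12)+O(|\fp|^{-2}\deg(\fp))$. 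The main obstacle is the moment computation underlying the weak convergence: one must control $\Tr B(\fp^{j})$ uniformly in $\fd'$ so as to isolate the main term $m(\fd')$ for even $j$ and to bound the elliptic contributions by $o(h(\fd'))$; once this is in place, the passage to the integral and its closed-form estimation are routine.
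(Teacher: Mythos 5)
Your proposal is correct in outline and, for part (1), complete: the factorization of the formula of Theorem \ref{corJ^d} into $\frac{(|\fp|+1)}{m(\fd')(q+1)^{n(\fd)}}\prod_{i\ge 2}\bigl((|\fp|+1)^2-\la_i^2\bigr)$ together with the Ramanujan--Petersson bound $|\la_i|\le 2\sqrt{|\fp|}$ is exactly the paper's (unwritten) argument. For part (2) your skeleton also coincides with the paper's: equidistribution of the cuspidal Brandt eigenvalues with respect to the $\fp$-adic Plancherel (Sato--Tate) measure, then integration of $\ln\bigl((|\fp|+1)^2-\la^2\bigr)$ against that measure, with $\ln m(\fd')$ and $n(\fd)\ln(q+1)$ absorbed as $o(h(\fd'))$. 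The genuine difference is how the equidistribution is obtained. The paper does not prove it: it quotes Nagoshi's theorem on eigenvalues of a fixed Hecke operator on Drinfeld cusp forms of growing level, plus Serre's observation that equidistribution persists on the new part (needed because Nagoshi treats the full cuspidal space, while the $\la_i$, $i\ge 2$, see only the $\fp$-new spectrum), and then refers to \cite[$\S$4.3]{laplacian} for the evaluation of $c(\fp)$. You instead propose to prove equidistribution from scratch by the method of moments for $B(\fp)$, via the Hecke recursion and a trace formula giving $\Tr B(\fp^{j})=m(\fd')+o(h(\fd'))$ for even $j$ and $o(h(\fd'))$ for odd $j$; your limiting moments $\widetilde{M}_{2k}$ are indeed the moments of the correct measure, and this is precisely the Serre-style argument underlying Nagoshi's theorem, so the route is viable and even has the advantage of bypassing the new-part reduction entirely. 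The cost is that the uniform control of the elliptic terms in the function-field trace formula as $\fd'$ varies --- which you correctly flag as ``the main obstacle'' and do not carry out --- is exactly the content that the citation to Nagoshi supplies; so as written, your part (2) is complete only modulo reproving Nagoshi's equidistribution in the Brandt-matrix setting, rather than quoting it as the paper does.
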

\begin{proof} It is known from the theory of automorphic forms that one of the roots of $P(x)$ is $\la_1=|\fp|+1$ 
and all other roots satisfy $|\la_i|\leq 2\sqrt{|\fp|}$, $2\leq i\leq h(\fd')$. 
More precisely, the Brandt matrices form an algebra which is isomorphic to 
the algebra of Hecke operators acting on the space spanned by one specific Eisenstein series 
and the ``$\fp$-new'' part of Drinfeld automorphic cusp forms on the congruence subgroup $\G_0(\fd)$ of $\GL_2(A)$; cf. \cite[$\S$2]{WY}. 
Using this, one obtains the bound $|\la_i|\leq 2\sqrt{|\fp|}$ from the Ramanujan-Petersson conjecture 
over function fields. Part (1) then easily follows from Theorem \ref{corJ^d}. 

If $\fp$ is fixed and $\deg(\fd')\to \infty$, then the eigenvalues $\la_2, \dots, \la_{h(\fd')}$ 
become equidistributed in the interval $\left[-2\sqrt{|\fp|}, 2\sqrt{|\fp|}\right]$ with respect to 
a certain Sato-Tate measure. 
This follows from the equidistribution result of Nagoshi \cite{Nagoshi} for the eigenvalues 
of a fixed Hecke operator acting on the spaces of Drinfeld cusp forms as the level varies, and the observation of Serre \cite[Thm. 2]{SerreJAMS} 
that the same equidistribution persists if one restricts the Hecke operator to the new part of the space of cusp forms. 
Having this fact, one can argue as in \cite[$\S$4.3]{laplacian} to obtain part (2). 
\end{proof}

\subsection*{Acknowledgements} This work was carried out while I 
was visiting the Taida Institute for Mathematical Sciences in Taipei and the Max Planck Institute for Mathematics in Bonn. 
 I thank these institutes for their hospitality and excellent working conditions. 
 I am grateful to Fu-Tsun Wei for very useful discussions related to the topic of this paper.  


\section{Graphs and Laplacians}\label{secG&L} 

\subsection{Graphs}\label{ssG} A \textit{graph} consists of a set of vertices $V(G)$, a set of oriented edges $E(G)$ 
and two maps 
$$
E(G)\to V(G)\times V(G), \quad 
e\mapsto (o(e), t(e))
$$
and 
$$
E(G)\to E(G), \quad e\mapsto \bar{e}
$$ 
such that $\bar{\bar{e}}=e$ and $t(\bar{e})=o(e)$; cf. \cite[p. 13]{SerreT}.  
For $e\in E(G)$, the edge $\bar{e}$  is called the \textit{inverse} of $e$, the vertex $o(e)$ (resp. $t(e)$) is called   
the \textit{origin} (resp. \textit{terminus}) of $e$. The vertices $o(e), t(e)$ are called the \textit{extremities} 
of $e$. We say that two vertices are \textit{adjacent} if they are the extremities of some edge. We say 
that $G$ is \textit{finite}, if it has finitely many vertices and edges. 
A \textit{weighted graph} is a graph $G$ equipped with two maps 
$w: E(G)\to \Z_{>0}$ and $w: V(G)\to \Z_{>0}$, called weight functions,  such that $w(e)=w(\bar{e})$ for all $e\in E(G)$. 
We say that a weighed graph $G$ 
is \textit{$N$-regular} if there is a positive integer $N\geq 1$ such that for any vertex $v\in V(G)$ we have  
$$
\sum_{\substack{e\in E(G)\\ t(e)=v}} \frac{w(v)}{w(e)}=N.  
$$

Note that two vertices might be the extremities of multiple edges
\begin{tikzpicture}[scale=.5]
\filldraw [black] (0,0) circle (2pt);
\filldraw [black] (1,0) circle (2pt);
\draw  (0,0) arc (180:360:.5cm);
\draw  (1,0) arc (0:180:.5cm);
\end{tikzpicture}
, i.e., there might be $e\neq e'$ 
with $o(e)=o(e')$ and $t(e)=t(e')$.  
Also, $G$ might have loops
\begin{tikzpicture}[scale=.5]
\filldraw [black] (0,0) circle (2pt);
\draw  (0,0) arc (180:360:.5cm);
\draw  (1,0) arc (0:180:.5cm);
\end{tikzpicture}
, i.e., edges $e\in E(G)$ with $e\neq \bar{e}$ and $t(e)= o(e)$.  
Finally, as in \cite{Kurihara}, we do not exclude the case $e=\bar{e}$: 
  \begin{tikzpicture}[scale=.5]
\filldraw [black] (0,0) circle (2pt);
\draw (0,0) -- (1,0);
\end{tikzpicture}

We say that a group $\G$ acts on the weighted graph $G$ if $\G$ acts on the sets $V(G)$ and $E(G)$ 
so that for any $\gamma\in \G$, $v\in V(G)$, $e\in E(G)$ 
we have $\gamma (o(e))=o(\gamma e)$, $\overline{\gamma e}=\gamma \bar{e}$, $w(\gamma e)=w(e)$, $w(\gamma v)=w(v)$.  
For a group $\G$ acting on $G$, we have a quotient graph $\G\bs G$ and a natural mapping $j: G\to \G\bs G$ such that $V(\G\bs G)=\G\bs V(G)$ 
and  $E(\G\bs G)=\G\bs E(G)$. Furthermore, if $G$ is a weighted graph and the stabilizers 
$$
\G_v:=\{\gamma \in \G\ |\ \gamma v=v\}\quad \text{and}\quad \G_e:=\{\gamma \in \G\ |\ \gamma e=e\}
$$
are finite for every $v\in V(G)$, $e\in E(G)$, then we make $\G\bs G$ into a weighted graph by putting $w(j(v))=|\G_v|\cdot w(v)$ 
and $w(j(e))=|\G_e|\cdot w(e)$.  

Let $K$ be a non-archimedean local field with finite residue field $k$. Let $\sT$ be the 
Bruhat-Tits tree associated with $\SL_2(K)$; cf. \cite{SerreT}. We make $\sT$ into a weighed graph by assigning the 
weight $1$ to all its vertices and edges. Then $\sT$ is a $(|k|+1)$-regular tree. 
The group $\PGL_2(K)$ naturally acts on $\sT$. Let $\G$ 
be a discrete subgroup of $\PGL_2(K)$ with compact quotient. Then the stabilizers in $\G$ of the vertices and edges of $\sT$ 
are finite, and, since $\G$ is cocompact in $\PGL_2(K)$, the quotient graph $\G\bs \sT$ is finite; cf. \cite[Lem. 5.1]{PapikianCrelle}.  
We make $\G\bs \sT$ into a weighed graph as in the previous paragraph.

\subsection{Laplacians}\label{ssL}
We assume from now on that $G$ is finite and connected. 
Let $C_0(G, \Q)$ be the vector space over $\Q$ with basis $V(G)$. The (weighted) \textit{Laplacian} is the linear 
transformation $\Delta: C_0(G, \Q)\to C_0(G, \Q)$ defined by 
$$
\Delta(v)=\sum_{t(e)=v}\frac{w(v)}{w(e)}(v-o(e)). 
$$
It is well-known (and easy to prove) that the eigenvalues of $\Delta$ are non-negative real numbers, 
and $0$ is an eigenvalue of $\Delta$ with multiplicity one; cf. \cite[$\S$2.1]{laplacian}. The weighted 
\textit{adjacency operator} is the linear 
transformation $\delta: C_0(G, \Q)\to C_0(G, \Q)$ defined by 
$$
\delta(v)=\sum_{t(e)=v}\frac{w(v)}{w(e)}o(e). 
$$

The pairing  
$E(G) \times E(G) \to \Z$: 
$$
( e, e') = 
\begin{cases} 
w(e) & \text{if $e'=e$},\\
-w(e) & \text{if $e'=\bar{e}$},\\
0 & \text{otherwise}, 
\end{cases}
$$
extends to a symmetric, bilinear, positive-definite pairing on the first simplicial 
homology group $H_1(G, \Z)$, which is just a weighted version of the usual cycle pairing.
Let $D(G)$ be the order of the cokernel of the map 
\begin{align*}
H_1(G, \Z) &\To \Hom(H_1(G, \Z), \Z) \\
\varphi & \mapsto ( \varphi, \ast ). 
\end{align*} 

For a graph $G$, let $E(G)^\ast$ be a subset of $E(G)$ with the following two properties: 
\begin{itemize}
\item[$(i)$] if $e=\bar{e}$ then $e\not \in E(G)^\ast$; 
\item[$(ii)$] if $e\neq \bar{e}$ then either $e$ or $\bar{e}$, but not both, is in $E(G)^\ast$. 
\end{itemize}
We define the \textit{mass} of $G$ as 
$$
m(G):=\sum_{v\in V(G)} w(v)^{-1}. 
$$
\begin{thm}\label{thmMyDelta}
Assume $G$ is finite with $n$ vertices. Let 
$$
0=\la_1<\la_2\leq \cdots \leq \la_n
$$
be the eigenvalues of $\Delta$. Then 
$$
D(G) = m(G)^{-1} \frac{\prod_{e\in E(G)^\ast} w(e)}{\prod_{v\in V(G)} w(v)} \prod_{i=2}^n \la_i. 
$$
\end{thm}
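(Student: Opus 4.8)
The plan is to realize the weighted Laplacian as $\Delta = \partial\partial^\ast$ for the boundary map $\partial$ of a two-term chain complex equipped with weighted inner products, and then to extract the formula from three determinant-of-lattice identities.

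First I would fix the integral structures. Let $C_1 = C_1(G,\Z)$ be the free $\Z$-module on $E(G)^\ast$; by properties $(i)$ and $(ii)$ the cycle pairing restricts on this basis to the diagonal form with Gram matrix $\mathrm{diag}(w(e))_{e\in E(G)^\ast}$, so $\det(C_1) = \prod_{e\in E(G)^\ast} w(e)$ and $H_1(G,\Z) = \ker\partial$ for the boundary $\partial\colon C_1 \to C_0 := C_0(G,\Z)$, $e\mapsto t(e)-o(e)$. On $C_0$ I put the form with Gram matrix $\mathrm{diag}(w(v))$. A direct computation shows that the adjoint $\partial^\ast$ of $\partial$ with respect to these two forms is $\partial^\ast(u) = \sum_{e}\frac{w(u)}{w(e)}(\delta_{t(e),u}-\delta_{o(e),u})\,e$, and that $\partial\partial^\ast = \Delta$ exactly as defined in Section \ref{ssL}; loops and edges with $e=\bar e$ contribute $v-v=0$ and cause no trouble, consistent with their exclusion from $E(G)^\ast$. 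Since $\Delta = \partial\partial^\ast$ and $\partial^\ast\partial$ have the same nonzero spectrum, $\prod_{i=2}^n\la_i$ is the product of the nonzero eigenvalues of $\partial^\ast\partial$, i.e. the determinant of $\partial^\ast\partial$ restricted to the invariant subspace $H_1^\perp$, the orthogonal complement of $H_1$ in $C_1$ for the cycle pairing.

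The computation then rests on the elementary relation that for a saturated sublattice $M$ of a lattice $(L,\langle\ ,\ \rangle)$ with nondegenerate form one has $\det(M)\det(M^\perp) = \det(L)\,[L:M\oplus M^\perp]^2$ (immediate from $\det(M\oplus M^\perp) = [L:M\oplus M^\perp]^2\det(L)$). Applying this to $H_1 \subseteq C_1$ gives $D(G) = \det(H_1) = \det(C_1)\,i^2/\det(H_1^\perp)$, where $i = [C_1 : H_1\oplus H_1^\perp]$. On the other hand, choosing a $\Z$-basis $x_1,\dots,x_{n-1}$ of $H_1^\perp$, the operator $\partial^\ast\partial$ has a matrix of determinant $\det(\langle\partial x_i,\partial x_j\rangle_0)/\det(\langle x_i,x_j\rangle_1)$, and since $\{\partial x_i\}$ spans the index-$c$ sublattice $\partial(H_1^\perp)$ of $B_0 := \mathrm{im}\,\partial$ (a saturated corank-$1$ sublattice of $C_0$), this yields $\prod_{i=2}^n\la_i = c^2\det(B_0)/\det(H_1^\perp)$ with $c = [B_0 : \partial(H_1^\perp)]$. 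The isomorphism $C_1/H_1 \xrightarrow{\ \sim\ } B_0$ induced by $\partial$ identifies $(H_1\oplus H_1^\perp)/H_1$ with $\partial(H_1^\perp)$, so $c = i$; substituting, the index factors cancel and $D(G) = \det(C_1)\prod_{i=2}^n\la_i/\det(B_0)$.

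It then remains to show $\det(B_0) = m(G)\prod_{v}w(v)$, which I would obtain by applying the same lattice relation to $B_0 \subseteq C_0$. Here $B_0$ is the augmentation kernel, and by connectivity its orthogonal line is $B_0^\perp = \ker\partial^\ast = \Q\cdot\zeta$ with $\zeta = \sum_v w(v)^{-1}v$; writing $\zeta_0 = \mu\zeta$ for the primitive integral generator, one computes $\langle\zeta_0,\zeta_0\rangle_0 = \mu^2 m(G)$ and, via the augmentation $\epsilon\colon C_0 \to \Z$ with kernel $B_0$, the index $[C_0 : B_0\oplus\Z\zeta_0] = |\epsilon(\zeta_0)| = \mu\, m(G)$. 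The relation $\det(B_0)\langle\zeta_0,\zeta_0\rangle_0 = \det(C_0)[C_0:B_0\oplus\Z\zeta_0]^2$ with $\det(C_0) = \prod_v w(v)$ then gives $\det(B_0) = m(G)\prod_v w(v)$, which combined with the previous paragraph yields the stated formula. The main obstacle is the first step: pinning down the correct weighted inner products so that $\partial\partial^\ast$ reproduces $\Delta$ on the nose, and then organizing the argument so that the non-integrality of $\partial^\ast$ is never confronted directly — all three determinant identities are phrased through Gram matrices of integral bases, and the only genuine content beyond linear algebra is the verification that the two index factors $c$ and $i$ coincide, which is what makes the formula weight-intrinsic.
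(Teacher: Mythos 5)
Your proposal is correct, but a direct comparison with ``the paper's own proof'' is not really possible: the paper does not prove Theorem \ref{thmMyDelta} internally, its proof being a one-line citation to Theorem 3.1 of \cite{laplacian}. So what you have produced is a complete, self-contained argument for a statement the paper only quotes. I checked the steps and they hold up. In particular: (a) with $C_1$ the free $\Z$-module on $E(G)^\ast$ and the inner products $\mathrm{diag}(w(e))$, $\mathrm{diag}(w(v))$, one indeed gets $\partial\partial^\ast=\Delta$; the only delicate point is that the paper's $\Delta$ sums over all of $E(G)$ while your $\partial\partial^\ast$ sees only $E(G)^\ast\cup\overline{E(G)^\ast}$, and the discrepancy consists of edges with $e=\bar e$, which have $o(e)=t(e)$ and contribute $0$ --- this is also the right reading of $H_1(G,\Z)$ and of the cycle pairing, since such edges are precisely what property $(i)$ of $E(G)^\ast$ excludes. (b) The identity $\det T=\det\bigl(\langle\partial x_i,\partial x_j\rangle_0\bigr)/\det\bigl(\langle x_i,x_j\rangle_1\bigr)$ for the restriction $T$ of $\partial^\ast\partial$ to $H_1^\perp$ is the standard Gram-matrix formula for an operator self-adjoint with respect to a nondegenerate form. (c) The index equality $c=i$, via the isomorphism $C_1/H_1\cong B_0$ carrying $(H_1\oplus H_1^\perp)/H_1$ to $\partial(H_1^\perp)$, is exactly the step that lets the inaccessible quantity $\det(H_1^\perp)$ cancel; this is the real content of the argument and you identify it as such. (d) The computation $\det(B_0)=m(G)\prod_v w(v)$ uses connectivity twice (through $B_0=\ker\epsilon$ and through the eigenvalue $0$ having multiplicity one), and both uses are legitimate under the standing hypotheses of $\S$\ref{ssL}. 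As a sanity check on normalizations, your formula $D(G)=\det(C_1)\prod_{i\geq 2}\la_i/\det(B_0)$ gives $D(G)=1$ when $G$ is a tree, as it must, and it reduces to Kirchhoff's theorem in the unweighted case. Whether your route coincides with the proof in \cite{laplacian} cannot be determined from this paper, but as a proof of the stated theorem it is valid.
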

\begin{proof}
See Theorem 3.1 in \cite{laplacian}. 
\end{proof}

The arithmetic significance of this theorem comes from the following. Let $\pi: X\to S$ be a 
semi-stable curve of genus $g\geq 1$ over a Dedekind scheme $S$ of dimension $1$. Recall that this means that $\pi$ 
is a proper and flat morphism whose fibres $X_{\bar{s}}$ over the geometric points $\bar{s}$ of $S$ are 
reduced, connected curves of arithmetic genus $g$, and have only ordinary double points as singularities; cf. \cite[Def. 10.3.14]{Liu}.  
We assume that the generic fibre $X_\eta$ of $X$ is a smooth, projective, geometrically connected curve. 
Let $s\in S$ be a closed point, and $x\in X_s$ be a singular point. There exists 
a scheme $S'$, \'etale  over $S$, such that any point $x'\in X':=X\times_S S'$ 
lying above $x$, belonging to a fiber $X_{s'}'$, is a split ordinary double point, and 
$$
\widehat{\cO}_{X', x'}\cong \widehat{\cO}_{S', s'}[\![u, v]\!]/(uv-c)
$$
for some $c\in \cO_{S', s'}$. Moreover, the valuation $w_x$ of $c$ for the normalized valuation of $\cO_{S', s'}$ 
is independent of the choice of $S', s'$, and of $x'$. For the proof of these facts we refer to \cite[Cor. 10.3.22]{Liu}. 

One can associate a graph $G_{X_s}$ to $X_s$, the so-called \textit{dual graph} (cf. \cite[p. 511]{Liu}): 
Let $k_s$ be the residue field at $s$. 
The vertices of $G_{X_s}$ are the irreducible components of $X_s\times_{k_s} \overline{k_s}$, and each 
ordinary double point $x\in X_s$ defines an edge $e_x$ whose extremities correspond to the irreducible components containing $x$ 
(the two orientations of $e_x$ correspond to a choice of one of the two branches passing through $x$ as the 
origin of $e_x$). We assign the weight $w(e_x)=w_x$.  

\begin{thm}\label{thmGroth} Let $J$ be the Jacobian variety of $X_\eta$. Let $\cJ$ be the N\'eron model of $J$ 
over $S$. 
Let $\Phi_{J, s}:=\cJ_{\bar{s}}/\cJ_{\bar{s}}^0$ be the group of connected components of $\cJ$ at $s\in S$. 
Then $|\Phi_{J,s}|=D(G_{X_s})$. 
\end{thm}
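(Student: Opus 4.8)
The plan is to reduce the statement to the classical computation of the component group of a Jacobian with semistable reduction over a strictly Henselian base, and then to match the combinatorial invariants. First I would localize at $s$: the formation of the N\'eron model commutes with \'etale base change, and $\Phi_{J,s}$, the dual graph $G_{X_s}$, and the thicknesses $w_x$ are all computed on the geometric fibre over $s$ and are insensitive to replacing $S$ by the strict Henselization (or completion) of $\cO_{S,s}$. Thus I may assume $S=\Spec R$ with $R$ a complete discrete valuation ring with algebraically closed residue field $k$ and fraction field $K$, so that $X\to\Spec R$ is a proper flat semistable curve, smooth of genus $g\ge 1$ over $K$.

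The main input is Grothendieck's theory of the monodromy pairing together with Raynaud's analysis of Jacobians (cf.\ \cite[\S 9.6]{NM}). The identity component of the special fibre $\cJ^0_{\bar{s}}$ is $\Pic^0_{X_s/k}$, which is an extension of the abelian variety $\prod_i \Pic^0(\widetilde{C}_i)$ (the product over the normalizations $\widetilde{C}_i$ of the components of $X_s$) by a torus $T$ whose character group $X^\ast(T)$ is canonically identified with $H_1(G_{X_s},\Z)$. Grothendieck's theorem then identifies $\Phi_{J,s}$ with the cokernel of the monodromy map $X_\ast(T)\to X^\ast(T)$; under $X^\ast(T)\cong H_1(G_{X_s},\Z)$ this map is induced by the weighted cycle pairing in which the edge $e_x$ carries the weight $w_x$. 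Granting this, $\Phi_{J,s}\cong\coker\big(H_1(G_{X_s},\Z)\to\Hom(H_1(G_{X_s},\Z),\Z)\big)$, whose order is $D(G_{X_s})$ by definition, and this is the desired identity.

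The step requiring genuine care --- and the one I expect to be the main obstacle --- is precisely that the thicknesses $w_x$ appear as edge weights, i.e.\ that the monodromy pairing is the $w_x$-weighted cycle pairing and not the naive one. I would verify this by resolving the singularities. Blowing up the node $x$, whose local equation is $uv=c$ with $w_x=\mathrm{val}(c)$, replaces it by a chain of $w_x-1$ copies of $\p^1$ and yields a regular model $\widetilde{X}$ with reduced special fibre (cf.\ \cite[\S 10.3]{Liu}); its dual graph $\widetilde{G}$ is $G_{X_s}$ with each edge $e_x$ subdivided into a path of length $w_x$, all new edges of weight $1$. For the regular, reduced-special-fibre model $\widetilde{X}$ the component group is given by Raynaud's intersection-matrix description and has order equal to the unweighted cycle-pairing discriminant $D(\widetilde{G})$ (equivalently, the order of the critical group of $\widetilde{G}$). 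Subdivision induces an isomorphism $H_1(\widetilde{G},\Z)\cong H_1(G_{X_s},\Z)$ under which a cycle traversing the path of length $w_x$ contributes $w_x$ to a self-pairing (each of the $w_x$ unit-weight sub-edges is crossed coherently), so the unit-weight pairing on $\widetilde{G}$ reproduces the $w_x$-weighted pairing on $G_{X_s}$; hence $D(\widetilde{G})=D(G_{X_s})$. Since resolving nodes leaves the generic fibre, and therefore $J$ and its N\'eron model, unchanged, we obtain $|\Phi_{J,s}|=D(\widetilde{G})=D(G_{X_s})$. The content of the argument is thus concentrated in the identification $X^\ast(T)\cong H_1(G_{X_s},\Z)$ carrying the monodromy pairing to the cycle pairing, and in the elementary but orientation-sensitive subdivision lemma.
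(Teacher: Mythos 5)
Your proposal is correct, and its backbone is the same as the paper's: the paper's entire proof is the citation of Grothendieck's results in \cite{SGA7} (Exp.\ IX, 11.5 and 12.10), i.e.\ exactly the statement you invoke -- $\Phi_{J,s}$ is the cokernel of the monodromy pairing on the character group of the toric part, which for a semistable curve is the cycle pairing on $H_1(G_{X_s},\Z)$ with the thicknesses $w_x$ as edge weights. The genuine difference is how the weights $w_x$ are justified. The paper leans on \cite[Exp.\ IX, 12.10]{SGA7}, which already states the monodromy pairing for a (not necessarily regular) semistable model in the thickness-weighted form, so nothing further is needed. You instead treat that weighted statement as the delicate point and derive it from the regular-model case: blow up each node into a chain of $w_x-1$ copies of $\p^1$ to get a regular semistable model with reduced special fibre, apply Raynaud's intersection-matrix description of the component group there (\cite[\S 9.6]{NM}), and transfer back via the subdivision lemma identifying the unit-weight cycle pairing on the subdivided graph with the $w_x$-weighted pairing on $G_{X_s}$ (using that blow-ups do not change the generic fibre, hence not the N\'eron model). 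This route is longer but more self-contained: it needs only the form of the theorem most commonly quoted in the literature (regular models, reduced fibres) plus elementary, if orientation-sensitive, graph theory; the paper's route is shorter because the reference it cites was proved at precisely the required level of generality. Your preliminary reduction to a strictly Henselian (or complete) base, via compatibility of N\'eron models with \'etale base change and the geometric nature of $G_{X_s}$ and $w_x$, is sound and is implicitly subsumed in the paper's citation as well.
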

\begin{proof}
This follows from 11.5 and 12.10 in \cite{SGA7}. 
\end{proof} 

Thus, Theorems \ref{thmMyDelta} and \ref{thmGroth} relate the order $|\Phi_{J,s}|$ to the eigenvalues 
of the weighted Laplacian of $G_{X_s}$. 
\begin{rem}
Note that we have assigned weights only to the edges of $G_{X_s}$, and 
in fact, $|\Phi_{J,s}|$ depends only on the weights of the edges of $G_{X_s}$. To apply 
the formula in Theorem \ref{thmMyDelta}, one can assign arbitrary weights to the vertices of $G_{X_s}$, e.g., all $1$. 
As we will see, in the situations where $X$ arises from quaternion algebras, there will be a natural choice 
for the weights of vertices.  
\end{rem}

\subsection{Involutions}\label{ssID} 
Let $G$ be a weighed $N$-regular finite connected graph. In addition, assume that $G$ is bipartite, that is, 
$V(G)$ is a disjoint union $V(G)=O\sqcup I$ such that 
every edge $e\in E(G)$ has one of its extremities in $O$ and the other in $I$; in particular, $G$ 
has no edges with $t(e)=o(e)$. 

Let $\tau$ be a non-trivial involution acting on $G$ (as a weighted graph) which interchanges $O$ and $I$.
Note that $\tau$ 
does not fix any vertices or edges, although it is possible that $\tau(e)=\bar{e}$. 
Let $G':=G/\langle \tau\rangle$ be the quotient of $G$ under the action of $\tau$. 
This graph $G'$ is a weighted graph in which $w(v')=w(v)$ and $w(e')=w(e)$, 
where $v\in V(G)$ is a preimage of $v'\in V(G')$ and $e\in E(G)$ is a preimage of $e'\in V(G')$. 
Note that $G'$ might have edges $e'$ with $\bar{e'}=e'$;  
this happens when $\tau(e)=\bar{e}$ for the preimage $e\in E(G)$ of $e'$. Also, $G'$ 
might contain loops; this happens if there are two edges $e_1\neq e_2$ in $G$ 
such that $o(e_1)=o(e_2)$, $t(e_1)=t(e_2)$, and $\tau(e_1)=\bar{e}_2$.  Finally, note that $G'$ 
is again $N$-regular.

Let $\Delta: C_0(G, \Q)\to C_0(G, \Q)$ be the weighted Laplacian of $G$: 
$$
\Delta(v)=Nv-\delta(v),
$$
where $\delta$ is the weighted adjacency operator of $G$. Let $\Delta'$ be the weighted Laplacian of $G'$
and $\delta'$ its weighted adjacency operator. 

\begin{lem}\label{lempmeigenvalues}
Let $h$ be the number of vertices of $G'$. If $\la_1, \dots, \la_h$ are the eigenvalues of $\delta'$, then 
$\pm \la_1, \dots, \pm \la_h$ are the eigenvalues of $\delta$. 
\end{lem}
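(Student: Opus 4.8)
The plan is to use the involution $\tau$ to split $C_0(G,\Q)$ into two $\delta$-invariant subspaces of dimension $h$, on which $\delta$ restricts to $+\delta'$ and $-\delta'$ respectively; this immediately yields the spectrum $\{\pm\la_1,\dots,\pm\la_h\}$. First I would record two structural facts. Since $\tau$ is an automorphism of the weighted graph $G$, a direct computation from the definition $\delta(v)=\sum_{t(e)=v}\frac{w(v)}{w(e)}o(e)$ shows that the linear extension of $\tau$ to $C_0(G,\Q)$ commutes with $\delta$. Moreover, because $\tau$ interchanges $O$ and $I$ it fixes no vertex, and it fixes no edge either (if $\tau e=e$ then $\tau$ would fix $t(e)$); hence $\tau$ acts freely on $V(G)$ and on $E(G)$, with each vertex orbit a pair $\{v,\tau v\}$, $v\in O$, $\tau v\in I$. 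In particular $|O|=|I|=h$, $\dim C_0(G,\Q)=2h$, and the $h$ vertices of $G'$ are naturally indexed by $O$ via $v\mapsto\overline v$ (the image of $v$ in $G'$); triviality of all stabilizers gives $w(\overline v)=w(v)$ and $w(e')=w(e)$ for the image $e'$ of an edge $e$.

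Since $\tau^2=1$ and $\tau$ commutes with $\delta$, the decomposition $C_0(G,\Q)=C^+\oplus C^-$ into the $(\pm1)$-eigenspaces of $\tau$ is preserved by $\delta$, so the spectrum of $\delta$ is the union of the spectra of $\delta|_{C^+}$ and $\delta|_{C^-}$. Each summand has dimension $h$, with bases $\{v+\tau v\}_{v\in O}$ and $\{v-\tau v\}_{v\in O}$. I would then introduce the linear isomorphisms $\phi,\psi\colon C_0(G',\Q)\to C^\pm$ defined by $\phi(\overline v)=v+\tau v$ and $\psi(\overline v)=v-\tau v$, and verify the intertwining identities $\delta\circ\phi=\phi\circ\delta'$ and $\delta\circ\psi=-\psi\circ\delta'$. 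Granting these, $\delta|_{C^+}$ is conjugate to $\delta'$ and $\delta|_{C^-}$ to $-\delta'$, so the $2h$ eigenvalues of $\delta$ are exactly $\{\la_i\}\cup\{-\la_i\}=\{\pm\la_1,\dots,\pm\la_h\}$, as claimed.

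The crux is the verification of these identities, and it is here that bipartiteness enters. For $v\in O$ one computes, using $\tau\delta=\delta\tau$, that $\delta(v\pm\tau v)=\sum_{t(e)=v}\frac{w(v)}{w(e)}\bigl(o(e)\pm\tau o(e)\bigr)$. Since $G$ is bipartite and $v\in O$, every origin $o(e)$ appearing here lies in $I$, hence equals $\tau u_e$ for a unique $u_e\in O$; consequently $o(e)+\tau o(e)=\phi(\overline{u_e})$ and $o(e)-\tau o(e)=-\psi(\overline{u_e})$. Comparing with $\delta'(\overline v)=\sum_{t(e')=\overline v}\frac{w(\overline v)}{w(e')}o(e')$ then reduces everything to a bijection between the edges of $G$ with terminus $v$ and the edges of $G'$ with terminus $\overline v$, under which $w(e')=w(e)$ and $o(e')=\overline{u_e}$. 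The natural candidate is $e\mapsto e'$: each $\tau$-orbit of edges terminating in $\{v,\tau v\}$ contains exactly one edge with terminus $v$ (the two termini $v,\tau v$ are distinct), so this map is a bijection, triviality of the edge stabilizers gives $w(e')=w(e)$, and $o(e')=\overline{u_e}$ is immediate from $o(e)=\tau u_e$.

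I expect the main obstacle to be precisely this edge correspondence: one must check that it is genuinely a bijection respecting weights and origins even in the degenerate cases that the text flags, namely loops of $G'$ and self-inverse edges arising when $\tau e=\bar e$, so that no spurious factor of two slips into the quotient's adjacency sum. Once that bookkeeping is done carefully, the intertwining identities follow from the displayed computation, and the conclusion is purely formal. The remainder is routine linear algebra, so I would keep the emphasis on the freeness of the $\tau$-action and on the bipartite structure, which together are exactly what forces $\delta|_{C^-}\cong-\delta'$ and hence the symmetry of the spectrum.
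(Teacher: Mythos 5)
Your proof is correct and is essentially the paper's argument in coordinate-free form: the paper writes $\delta$ in the basis $\{v_1,\dots,v_h\}\cup\{\tau v_1,\dots,\tau v_h\}$ as the block matrix $\begin{pmatrix} 0 & M\\ M & 0\end{pmatrix}$ with $M$ the matrix of $\delta'$, then conjugates by $\begin{pmatrix} I_h & I_h\\ I_h & -I_h\end{pmatrix}$, which is precisely your splitting into the $(\pm 1)$-eigenspaces $C^{\pm}$ of $\tau$ with the intertwiners $\phi,\psi$. The only difference is that you spell out the edge bijection identifying the off-diagonal block with $\delta'$ (including the loop and $\tau e=\bar{e}$ cases), a step the paper asserts as easy to see.
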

\begin{proof} It is clear that $G$ has $2h$ vertices. 
It is also clear that $\delta$ and $\tau$ commute as linear operators on $C_0(G, \R)$. 
Let $v_1, \dots, v_h$ be the vertices in $O$, so 
that $\tau(v_1), \dots, \tau(v_h)$ are the vertices in $I$. Let $v_1', \dots, v_h'\in V(G')$ 
be the images of $v_1, \dots, v_h$; we take these vertices as a basis of $C_0(G', \R)$. Let $M$ 
be the matrix of $\delta'$ with respect to this basis. From what was said, it is easy to see that $\delta$ 
is represented by the $2h\times 2h$ matrix $\begin{pmatrix} 0 & M\\ M & 0\end{pmatrix}$. 
Since $$\begin{pmatrix} I_h & I_h\\ I_h & -I_h\end{pmatrix}\begin{pmatrix} 0 & M\\ M & 0\end{pmatrix}
\begin{pmatrix} I_h & I_h\\ I_h & -I_h\end{pmatrix}^{-1}=\begin{pmatrix} M & 0\\ 0 & -M\end{pmatrix},$$
the claim follows.  
\end{proof}

The largest eigenvalue of $\delta'$ is $N$, which occurs with multiplicity $1$. 

\begin{cor}\label{corPpolynomial} Let $P(x)$ be the characteristic polynomial of $\delta'$. Let $P'(x)$ be the derivative of $P(x)$. 
Enumerate the eigenvalues of $\delta'$ so that 
$
N=\la_1> \la_2\geq \cdots\geq \la_h. 
$
The eigenvalues of $\Delta$ are $N\pm \la_i$, $1\leq i\leq h$, and the product of non-zero eigenvalues of $\Delta$ is 
$$
2N\prod_{i=2}^h (N+\la_i)(N-\la_i)=|P(-N)P'(N)|. 
$$
\end{cor}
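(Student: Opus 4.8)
The plan is to push everything through the relation $\Delta = N\,\mathrm{Id} - \delta$ on $C_0(G,\Q)$, where $\delta$ is the adjacency operator of $G$, and then to read off the spectrum of $\delta$ from that of $\delta'$ using Lemma \ref{lempmeigenvalues}. First I would record that, by Lemma \ref{lempmeigenvalues}, the eigenvalues of $\delta$ are exactly $\pm\la_1,\dots,\pm\la_h$, where $\la_1,\dots,\la_h$ are the eigenvalues of $\delta'$. Since $\Delta = N\,\mathrm{Id} - \delta$, the eigenvalues of $\Delta$ are then $N-(\pm\la_i) = N\mp\la_i$, i.e. the $2h$ numbers $N\pm\la_i$ for $1\leq i\leq h$. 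This already yields the first assertion of the corollary.

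Next I would isolate the zero eigenvalue and pin down the signs. With the enumeration $N=\la_1>\la_2\geq\cdots\geq\la_h$, the value $N-\la_1=0$ accounts for the unique zero eigenvalue of $\Delta$ (recall from Subsection \ref{ssL} that $0$ is an eigenvalue of $\Delta$ of multiplicity one). Because $\la_1=N$ is the strictly largest eigenvalue of $\delta'$, we have $N-\la_i>0$ for all $i\geq 2$; and the simplicity of the zero eigenvalue of $\Delta$ forces $N+\la_i\neq 0$ for every $i$, hence $N+\la_i>0$ for all $i$ (equivalently, $-N$ is not an eigenvalue of $\delta'$). Therefore the non-zero eigenvalues of $\Delta$ are $N+\la_1=2N$ together with $N-\la_i$ and $N+\la_i$ for $2\leq i\leq h$, all of them positive, so their product equals $2N\prod_{i=2}^h(N+\la_i)(N-\la_i)$, which is the middle expression in the corollary.

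Finally I would match this against $P$. Writing $P(x)=\prod_{i=1}^h(x-\la_i)=(x-N)\prod_{i=2}^h(x-\la_i)$, a direct substitution gives $P(-N)=(-2N)(-1)^{h-1}\prod_{i=2}^h(N+\la_i)$, while differentiating the factored form and evaluating at $x=N$ kills every term except the one in which the factor $(x-N)$ is differentiated away, so that $P'(N)=\prod_{i=2}^h(N-\la_i)$. Taking absolute values and using the positivity established above, $|P(-N)|=2N\prod_{i=2}^h(N+\la_i)$ and $|P'(N)|=\prod_{i=2}^h(N-\la_i)$, whence $|P(-N)P'(N)|=2N\prod_{i=2}^h(N+\la_i)(N-\la_i)$, which is exactly the product of the non-zero eigenvalues of $\Delta$.

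The only genuinely delicate point will be the sign bookkeeping in this last step: the clean removal of the absolute values relies on knowing that $N$ is the strictly largest eigenvalue of $\delta'$ with multiplicity one and that $-N$ is not an eigenvalue, the latter being equivalent to the simplicity of the zero eigenvalue of $\Delta$ recalled in Subsection \ref{ssL}. Once those facts are in hand, everything else is a routine computation with the factored characteristic polynomial, so I do not anticipate a serious obstacle.
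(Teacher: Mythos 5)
Your proof is correct and follows essentially the same route as the paper's: deduce the spectrum of $\Delta$ from Lemma \ref{lempmeigenvalues} via $\Delta = N\,\mathrm{Id}-\delta$, then evaluate $P(-N)$ and $P'(N)$ from the factorization $P(x)=(x-N)\prod_{i=2}^h(x-\la_i)$. The only difference is that you make explicit the positivity bookkeeping (that $N+\la_i>0$, via the simplicity of the zero eigenvalue of $\Delta$) needed to drop the absolute values, which the paper leaves implicit.
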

\begin{proof}
The first claim follows from Lemma \ref{lempmeigenvalues}. The last equality follows from 
$$
P'(N) =\prod_{i=2}^h(N-\la_i), \qquad  
P(-N) =(-1)^h 2N\prod_{i=2}^h(N+\la_i).
$$
\end{proof}


\section{Graphs arising from quaternion algebras}\label{ss2Appendix}

The set-up and notation in this section will be the same as in the introduction. 
In particular, $\sB$ is an indefinite quaternion algebra over $F$ of discriminant $\fd$.   
We fix a prime $\fp | \fd$, and denote by $\sB'$ the definite quaternion algebra 
of discriminant $\fd':=\fd/\fp$. 
Let $K:=F_\fp$, and 
\begin{align*}
\G_0 & :=\left(\cM\otimes A[\fp^{-1}]\right)^\times/ A[\fp^{-1}]^\times, \\
\G_+ & :=\left\{m\in \left(\cM\otimes A[\fp^{-1}]\right)^\times \ \big|\ \ord_\fp\Nr(m)\in 2\Z \right\}\big/A[\fp^{-1}]^\times.  
\end{align*}
Note that $\G_+$ is a normal subgroup of $\G_0$ of index $2$. 
By fixing an isomorphism $\sB'\otimes_F K\cong \M_2(K)$, 
one can consider $\G_0$ and $\G_+$ as discrete cocompact subgroups of $\PGL_2(K)$. 
Let $\sT$ be the Bruhat-Tits tree associated with $\SL_2(K)$. 
As we have discussed in $\S$\ref{ssG}, the quotient graphs 
$G_0:=\G_0\bs \sT$ and $G_+:=\G_+\bs \sT$ are finite, weighted, $(|\fp|+1)$-regular graphs. 

Let $V(\sT)=V(\sT)_1\sqcup V(\sT)_2$ be the disjoint union such that, for $v\in V(\sT)_i$ 
and $u\in V(\sT)_j$, the combinatorial distance between $v$ and $u$ is even if and only if $i=j$. 
Using the Corollary on page 75 in \cite{SerreT}, it is easy to see that $\gamma V(\sT)_i=V(\sT)_i$ ($i=1,2$) 
for $\gamma\in \G_+$, and $\gamma V(\sT)_1=V(\sT)_2$ and $\gamma V(\sT)_2=V(\sT)_1$ 
for $\gamma\in \G_0-\G_+$. Thus, $G_+$ is bipartite. 
Fix some $\tau\in \G_0 - \G_+$. It induces an involution of $G_+$ which does not depend on the choice of $\tau$. 
By abuse of notation, we will denote this involution by $\tau$. 
The triple $\{G_+$, $\tau$, $G_0=G_+/\langle \tau \rangle\}$ fits into the setting of $\S$\ref{ssID}. 

As is explained in \cite[p. 291]{Kurihara}, using Eichler's approximation theorem, one can 
identify $V(G_0)=\{v_1, \dots, v_{h(\fd')}\}$ with the set $\{\cM_1, \dots, \cM_{h(\fd')}\}$ so that 
$(\G_0)_{v_i'}\cong \cM_i^\times/A^\times$, where $v_i'$ is a preimage of $v_i$ in $\sT$. 
In particular, the cardinality of $V(G_0)$ is the class number of $\sB'$ and 
$w(v_i)=w_i$, $1\leq i\leq h(\fd')$. 
Moreover, by \cite[p. 294]{Kurihara} (see also \cite[Prop. 4.4]{Gross}), 
the weighted adjacency operator of $G_0$ is given by the transpose of the $\fp$-th Brandt matrix $B(\fp)$ if we fix $v_1, \dots, v_{h(\fd')}$ 
as a basis of $C_0(G_0, \R)$. 
(In \cite{Kurihara}, Kurihara considers only quaternion algebras over $\Q$, but his arguments 
apply also, essentially verbatim, to quaternion algebras over $F$.)

\begin{lem}\label{lemKurihara}
Let $v\in V(G_0)$ be a vertex of weight $q+1$. For an edge $e\in E(G_0)$ with $t(e)=v$ we have either $w(e)=1$ 
or $w(e)=q+1$. Moreover, 
$$
\#\left\{e\in E(G_0)\ |\ t(e)=v,\ w(e)=q+1\right\}=1+(-1)^{\deg(\fp)}. 
$$
\end{lem}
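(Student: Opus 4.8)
The plan is to compute the edge weights at $v$ directly from the action of the vertex stabiliser on the Bruhat--Tits tree, and to read off the number of weight-$(q+1)$ edges from the splitting behaviour of $\F_{q^2}$ over the residue field $\F_\fp$. Recall from $\S\ref{ssG}$ that, since $\sT$ carries the trivial weighting, the weight of an edge $e\in E(G_0)$ is $w(e)=|(\G_0)_{\tilde e}|$ for any preimage $\tilde e\in E(\sT)$, and that an edge stabiliser is contained in the stabiliser of each of its extremities. Fix a preimage $\tilde v\in V(\sT)$ of $v$; by hypothesis $(\G_0)_{\tilde v}\cong\cM_v^\times/A^\times\cong\F_{q^2}^\times/\F_q^\times$ is cyclic of order $q+1$. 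The edges of $G_0$ with terminus $v$ are in bijection with the orbits of $(\G_0)_{\tilde v}$ on the edges of $\sT$ with terminus $\tilde v$, and for such an edge $w(e)$ is the order of the stabiliser in $(\G_0)_{\tilde v}$ of the corresponding edge of $\sT$ (the edge stabiliser being contained in the terminus stabiliser). In particular $w(e)\mid q+1$.

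Next I would make the action explicit. Normalising so that $\tilde v$ is the class of $\cO_K^2$, where $\cO_K$ is the ring of integers of $K=F_\fp$, the edges of $\sT$ emanating from $\tilde v$ are identified with $\p^1(\F_\fp)$, and the action of $(\G_0)_{\tilde v}$ factors through the reduction $\GL_2(\cO_K)\to\GL_2(\F_\fp)$ acting on $\p^1(\F_\fp)$. Since the elements of $\F_{q^2}$ are integral, the embedding $\F_{q^2}\hookrightarrow\M_2(K)$ lands in $\M_2(\cO_K)$ and reduces to a ring homomorphism $\F_{q^2}\to\M_2(\F_\fp)$, injective because $\F_{q^2}$ is a field. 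Thus $\F_\fp^2$ becomes a faithful module over $R:=\F_{q^2}\otimes_{\F_q}\F_\fp$, the two factors acting through the commuting scalar and $\F_{q^2}$-actions, and a dimension count gives $\dim_{\F_q}\F_\fp^2=2\deg(\fp)=\dim_{\F_q}R$, so $\F_\fp^2\cong R$ as an $R$-module. The weight computation then reduces to locating the eigenlines on $\p^1(\F_\fp)$ of the image $\bar\zeta\in\GL_2(\F_\fp)$ of a generator $\zeta$ of $\F_{q^2}^\times$, whose class generates $(\G_0)_{\tilde v}$.

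The dichotomy is governed by whether $\F_{q^2}\subseteq\F_\fp$, equivalently by the parity of $\deg(\fp)$. If $\deg(\fp)$ is even then $\F_{q^2}\subseteq\F_\fp$ and $R\cong\F_\fp\times\F_\fp$, so $\bar\zeta$ has two distinct eigenvalues in $\F_\fp$, namely $\zeta$ and $\zeta^q$ under the two embeddings $\F_{q^2}\hookrightarrow\F_\fp$, giving exactly two fixed lines. If $\deg(\fp)$ is odd then $\F_{q^2}\cap\F_\fp=\F_q$, so $R\cong\F_{q^{2\deg(\fp)}}$ is a field, $\bar\zeta$ has no eigenvalue in $\F_\fp$, and there is no fixed line. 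In either case I would rule out intermediate stabilisers: for $1\le k\le q$ the class of $\zeta^k$ is nontrivial in $\F_{q^2}^\times/\F_q^\times$, hence $\bar\zeta^k$ is non-scalar and has exactly the same eigenlines as $\bar\zeta$ in the even case, and none in the odd case; so every point of $\p^1(\F_\fp)$ has stabiliser equal to $\{1\}$ or to all of $(\G_0)_{\tilde v}$. This shows $w(e)\in\{1,q+1\}$, and that the number of edges with $w(e)=q+1$ equals the number of fixed lines, namely $2=1+(-1)^{\deg(\fp)}$ for $\deg(\fp)$ even and $0=1+(-1)^{\deg(\fp)}$ for $\deg(\fp)$ odd.

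The main obstacle is the middle step: correctly identifying the reduced action on $\p^1(\F_\fp)$ with multiplication on the $R$-module $\F_\fp^2$ and excluding intermediate stabilisers, rather than the final parity count, which is then immediate. As a consistency check, the remaining orbits all have size $q+1$, matching the divisibilities $q+1\mid q^{\deg(\fp)}-1$ for $\deg(\fp)$ even and $q+1\mid q^{\deg(\fp)}+1$ for $\deg(\fp)$ odd.
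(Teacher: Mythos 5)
Your overall strategy---identify the edges of $G_0$ at $v$ with the $(\G_0)_{\tilde v}$-orbits on $\mathrm{Star}(\tilde v)\cong\p^1(\F_\fp)$ and count fixed lines of the cyclic group $(\G_0)_{\tilde v}\cong\F_{q^2}^\times/\F_q^\times$---is essentially the paper's proof of the second claim, and your derivation of the first claim from ``edge stabilisers sit inside vertex stabilisers'' is a reasonable, more self-contained alternative to the paper's appeal to Eichler orders and \cite{DvG}. However, there is a genuine gap at the step where you declare $\F_\fp^2$ to be a \emph{faithful} module over $R=\F_{q^2}\otimes_{\F_q}\F_\fp$. Faithfulness means injectivity of the algebra map $R\to\M_2(\F_\fp)$, and this does not follow from the injectivity of $\F_{q^2}\to\M_2(\F_\fp)$, which is all you establish. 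Precisely in the critical case where $\deg(\fp)$ is even, $\F_\fp$ already contains a copy of $\F_{q^2}$, so an injective unital homomorphism $\F_{q^2}\to\M_2(\F_\fp)$ could a priori land inside the scalar matrices; then $R\to\M_2(\F_\fp)$ would have a maximal ideal as kernel, $\F_\fp^2\not\cong R$, and the reduction $\bar\zeta$ of a generator would fix \emph{all} $|\fp|+1$ points of $\p^1(\F_\fp)$ rather than exactly $2$. Both your fixed-point count and your exclusion of intermediate stabilisers (which rests on $\bar\zeta^k$ being non-scalar for $1\le k\le q$) collapse in that scenario, so ``injective, hence faithful'' is a non sequitur exactly at the crux of the even-degree case.

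The missing ingredient---and the way the paper proceeds---is that $\zeta$ acts through a matrix whose characteristic polynomial is an irreducible quadratic \emph{over $\F_q$}: since $\F_q[\zeta]\cong\F_{q^2}$ is not contained in the centre $F$ of $\sB'$, the reduced characteristic polynomial of $\zeta$ equals the minimal polynomial of a generator of $\F_{q^2}/\F_q$, a separable irreducible quadratic with coefficients in $\F_q\subseteq\cO_K$. Characteristic polynomials commute with reduction, so $\bar\zeta$ has this same separable quadratic as its characteristic polynomial and therefore cannot be scalar, whose characteristic polynomial $(x-c)^2$ has a double root; the same argument applies to $\zeta^k$ for $1\le k\le q$ because $\zeta^k\notin\F_q$. (An alternative fix: the kernel of $\GL_2(\cO_K)\to\GL_2(\F_\fp)$ is pro-$p$, so a scalar reduction of the prime-to-$p$-order element $\zeta$ would force $\zeta$ itself to be scalar, contradicting non-centrality.) Once non-scalarity is in hand, your module-theoretic bookkeeping does go through and gives a clean organisation of the count: in the even case the two fixed lines are the eigenlines for the two embeddings $\F_{q^2}\hookrightarrow\F_\fp$, and in the odd case $R$ is a field and there are none. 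A minor further point: ``the elements of $\F_{q^2}$ are integral'' does not by itself place the image in the particular maximal order $\M_2(\cO_K)$ (integrality does not single out a maximal order of $\M_2(K)$); you need that the identification of $\tilde v$ with the class of $\cO_K^2$ is made so that the maximal order attached to $\tilde v$ is $\cM'\supseteq\F_q[\zeta]$, as in the paper's set-up.
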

\begin{proof} Let $e'\in E(\sT)$ be a preimage of $e\in E(G_0)$. Similar to the case of vertices, 
one can show that the stabilizer $(\G_0)_{e'}$ is isomorphic to $\cI^\times/A^\times$ for a certain Eichler $A$-order $\cI$ in $\sB'$. 
On the other hand, by \cite[p. 383]{DvG}, $\cI^\times$ is isomorphic either to $\F_q^\times$ or $\F_{q^2}^\times$. 
This implies the first claim. 

The second claim can also be deduced from the results in \cite{DvG}, but a more direct 
argument is the following:
Let $v'\in V(\sT)$ be a preimage of $v$. Through the identification discussed in the introduction, let $\cM'$ be the maximal order corresponding to $v'$. 
Then ${\cM'}^\times\cong \F_{q^2}^\times$. Let $\gamma$ be a generator of this group. We need to count the number of 
edges $e'\in E(\sT)$ such that $t(e')=v'$ and $\gamma e'=e'$. The set $$\mathrm{Star}(v')=\{e'\in E(\sT)\ |\ t(e')=v'\}$$ 
is in natural bijection with $\p^1(\F_\fp)$, and through this bijection $\gamma$ acts on $\mathrm{Star}(v')$ in the same way 
as a certain matrix in $\GL_2(\F_q)$ acts on $\p^1(\F_\fp)$; cf. \cite[p. 292]{Kurihara}. 
The characteristic polynomial of $\gamma$, as a matrix in $\GL_2(\F_q)$, is an irreducible quadratic polynomial over $\F_q$. 
It is easy to check that such a matrix has two fixed points in $\p^1(\overline{\F}_q)$ and these points lie 
in $\p^1(\F_{q^2})-\p^1(\F_q)$. Therefore, $\gamma$ acting on $\p^1(\F_\fp)$ has no fixed points if $\deg(\fp)$ is odd, 
and has exactly two fixed points if $\deg(\fp)$ is even. 
\end{proof}

\begin{thm}\label{thmG+} The graph $G_+$ has the following properties:
\begin{enumerate}
\item $G_+$ is a finite, weighted, $(|\fp|+1)$-regular graph. 
\item $|V(G_+)|=2h(\fd')$.
\item For $v\in V(G_+)$, we have $w(v)=1$ or $q+1$. The number of vertices of weight $q+1$ is $2h_{q+1}$.  
\item $\sum_{v\in V(G_+)}w(v)^{-1}=2m(\fd')$. 
\item For $e\in E(G_+)$, we have $w(e)=1$ or $q+1$. Modulo the 
orientation, the number of edges of $G_+$ of weight $q+1$ is $h_{q+1}\left(1+(-1)^{\deg(\fp)}\right)$.  
\item 
$$
D(G_+) =\frac{\left|P(-|\fp|-1)\cdot P'(|\fp|+1)\right|}{2m(\fd')(q+1)^{n(\fd)}}.\\
$$
\end{enumerate}
\end{thm}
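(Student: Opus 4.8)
The plan is to establish the six assertions in order, exploiting throughout that the quotient map $\pi\colon G_+\to G_0$ induced by the inclusion $\G_+\subset \G_0$ is a free double cover which is moreover a \emph{local isomorphism}. Parts (1) and (2) are essentially recorded already: the $(|\fp|+1)$-regularity and finiteness of $G_+$ were noted when the quotient graphs were introduced, and since $\tau$ interchanges the two halves of the bipartition $V(G_+)=O\sqcup I$ it fixes no vertex, so $|V(G_+)|=2|V(G_0)|=2h(\fd')$ (as in the proof of Lemma \ref{lempmeigenvalues}).

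For part (3) the key observation is that the vertex stabilizers do not shrink when passing from $\G_0$ to $\G_+$. Indeed, if $\gamma\in (\G_0)_{v'}$ for a lift $v'\in V(\sT)$, then $\gamma$ comes from a unit of the maximal order $\cM_i$ attached to $v'$, so $\Nr(\gamma)\in A^\times=\F_q^\times$ and $\ord_\fp\Nr(\gamma)=0$ is even; hence $\gamma\in\G_+$ and $(\G_+)_{v'}=(\G_0)_{v'}$. Consequently $w$ is preserved by $\pi$: each vertex of $G_0$ lifts to exactly two vertices of $G_+$ of the same weight. As $G_0$ has $h_{q+1}$ vertices of weight $q+1$ (the $w_i$ equal to $q+1$), $G_+$ has $2h_{q+1}$ of them, and the remaining vertices have weight $1$. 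Part (4) is then immediate: $m(G_+)=\sum_{v\in V(G_+)}w(v)^{-1}=2\sum_{i}w_i^{-1}=2m(\fd')$.

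For part (5) I would first note that the same norm/valuation argument shows the edge stabilizers also agree, $(\G_+)_{e'}=(\G_0)_{e'}\cong \cI^\times/A^\times$ for an Eichler order $\cI$, so $w(e)\in\{1,q+1\}$ on $G_+$; moreover, since $(\G_+)_{v'}=(\G_0)_{v'}$, the cover $\pi$ is a local isomorphism, identifying (weight-preservingly) the star of each vertex of $G_+$ with the star of its image in $G_0$. A weight-$(q+1)$ edge has stabilizer $\F_{q^2}^\times$ contained in both vertex stabilizers, so both its endpoints have weight $q+1$. Counting oriented edges of weight $q+1$ by their terminus and applying Lemma \ref{lemKurihara} to each of the $2h_{q+1}$ weight-$(q+1)$ vertices of $G_+$ (each contributing $1+(-1)^{\deg(\fp)}$ such edges through the local isomorphism) gives $2h_{q+1}\bigl(1+(-1)^{\deg(\fp)}\bigr)$ oriented weight-$(q+1)$ edges; since $G_+$ is bipartite there are no edges with $e=\bar e$, so dividing by two yields $h_{q+1}\bigl(1+(-1)^{\deg(\fp)}\bigr)$ edges modulo orientation.

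Finally, for part (6) I apply Theorem \ref{thmMyDelta} to $G=G_+$, which has $n=2h(\fd')$ vertices. The mass factor contributes $m(G_+)^{-1}=(2m(\fd'))^{-1}$ by part (4). The weight ratio is computed from parts (3) and (5): taking $E(G_+)^\ast$ to consist of one edge from each pair $\{e,\bar e\}$,
$$
\frac{\prod_{e\in E(G_+)^\ast}w(e)}{\prod_{v\in V(G_+)}w(v)}
=\frac{(q+1)^{h_{q+1}(1+(-1)^{\deg(\fp)})}}{(q+1)^{2h_{q+1}}}
=(q+1)^{-h_{q+1}(1-(-1)^{\deg(\fp)})}
=(q+1)^{-n(\fd)}.
$$
For the product of nonzero Laplacian eigenvalues I invoke Corollary \ref{corPpolynomial} with $N=|\fp|+1$: its $P(x)$ is the characteristic polynomial of the adjacency operator $\delta'$ of $G_0$, which equals the characteristic polynomial of the Brandt matrix $B(\fp)$ because $\delta'$ is its transpose; thus $\prod_{i=2}^{2h(\fd')}\la_i=\bigl|P(-|\fp|-1)\,P'(|\fp|+1)\bigr|$. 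Assembling these three factors gives the stated formula. The step I expect to be the main obstacle is part (5)—keeping the oriented/unoriented edge bookkeeping straight while transporting the local count of Lemma \ref{lemKurihara} from $G_0$ up to $G_+$—together with the verification that the stabilizers are unchanged under $\G_+\subset\G_0$, which is exactly what makes $\pi$ a local isomorphism and underpins every step.
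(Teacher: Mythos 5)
Your proposal is correct and follows the same skeleton as the paper's proof: parts (1)--(4) come from the fact that vertices double and weights are preserved under the free involution $\tau$, part (5) rests on Lemma \ref{lemKurihara}, and part (6) assembles Theorem \ref{thmMyDelta} and Corollary \ref{corPpolynomial} using the identification of the adjacency operator $\delta'$ of $G_0$ with the transpose of $B(\fp)$, so that its characteristic polynomial is $P(x)$. The one place you genuinely diverge is the bookkeeping in part (5): the paper counts weight-$(q+1)$ edges \emph{downstairs} in $G_0$, where each such edge is counted twice from its two endpoints when $e\neq\bar{e}$ and once when $e=\bar{e}$, and then observes that this $2$/$1$ overcounting exactly matches the $2$/$1$ count of preimages (modulo orientation) in $G_+$; you instead transport Lemma \ref{lemKurihara} \emph{upstairs} via the weight-preserving star bijection coming from the equality of stabilizers $(\G_+)_{v'}=(\G_0)_{v'}$ and $(\G_+)_{e'}=(\G_0)_{e'}$, and count directly in $G_+$, where bipartiteness rules out $e=\bar{e}$ altogether. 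Both are valid: your version trades the paper's $e=\bar{e}$ case analysis for an explicit verification that the double cover is a local isomorphism. That verification does hold --- any element of $\G_0$ fixing a vertex or an oriented edge of $\sT$ must preserve the two classes of the bipartition of $V(\sT)$, hence lies in $\G_+$, which is the same observation the paper uses to show $\tau$ acts freely; your alternative argument via units of $\cM_i$ having reduced norm in $\F_q^\times$ is also fine, given the paper's cited identification $(\G_0)_{v_i'}\cong\cM_i^\times/A^\times$. Arguably your route makes the count in $G_+$ look less like a coincidental cancellation, at the cost of one extra structural claim.
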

\begin{proof}
We have $|V(G_+)|=2|V(G_0)|=2h(\fd')$, the weight of $v\in V(G_+)$ 
is the weight of its image in $G_0$, and every $v\in V(G_0)$ has exactly two preimages in $G_+$. This proves (1)-(4). 
Similarly, the weight of $e\in E(G_+)$ is equal to the weight of its image in $G_0$. Therefore, by Lemma \ref{lemKurihara}, for 
$e\in E(G_+)$ we have $w(e)=1$ or $q+1$. We can count the number of edges in $G_0$ of weight $q+1$ using the same lemma. 
More precisely, for each of the $h_{q+1}$ vertices of $G_0$ with weight $q+1$ we get $\left(1+(-1)^{\deg(\fp)}\right)$ 
edges $e$ of weight $q+1$ terminating at the given vertex. This way we count each edge with non-trivial weight twice if $e\neq \bar{e}$ (once from $t(e)$ 
and once from $t(\bar{e})$), and count the edge only once if $e=\bar{e}$. On the other hand, an edge of $G_0$ has two preimages in $G_+$ 
if $e\neq \bar{e}$, and one preimage if $e=\bar{e}$. Thus, modulo the 
orientation, the number of edges of $G_+$ of weight $q+1$ is $h_{q+1}\left(1+(-1)^{\deg(\fp)}\right)$.
We proved that 
$$
\frac{\prod_{e\in E(G_+)^\ast}w(e)}{\prod_{v\in V(G_+)}w(v) } = \frac{(q+1)^{h_{q+1}\left(1+(-1)^{\deg(\fp)}\right)}}{(q+1)^{2h_{q+1}}} 
= \frac{1}{(q+1)^{n(\fd)}}. 
$$
Now the formula in part (6) follows from Theorem \ref{thmMyDelta} and Corollary \ref{corPpolynomial}. 
\end{proof}

As follows from the analogue of the Cherednik-Drinfeld uniformization
for $X^\fd_{K}$, proven in this context by Hausberger
\cite{Hausberger}, $X^\fd_{K}$ is a twisted Mumford curve. More precisely, if we denote by
$\cO_\fp^{(2)}$ the ring of integers of the quadratic unramified extension of $K$ and
denote by $\F_\fp^{(2)}$ the residue field of $\cO_\fp^{(2)}$, then
$X^\fd_{K}$ has a semi-stable model $X^\fd_{\cO_\fp^{(2)}}$ over
$\cO_\fp^{(2)}$ such that the dual graph of $X^\fd_{\F_\fp^{(2)}}$
is the weighted graph $G_+$. Therefore, using Theorem \ref{thmGroth} and Theorem \ref{thmG+}, we obtain Theorem \ref{corJ^d}.


\section{Shimura curves over $\Q$}\label{sSC} Let $\sB$ be an indefinite quaternion algebra over $\Q$
of discriminant $d$. Let $X^d$ be the corresponding Shimura curve over $\Q$ 
parametrizing abelian surfaces equipped with an action of $\sB$. The Jacobian variety $J^d$ of $X^d$ 
has bad reduction at the primes dividing $d$. 

Fix a prime $p$ dividing $d$. 
Let $\sB'$ be the rational definite quaternion algebra of discriminant $d'=d/p$. Fix a maximal 
$\Z$-order $\cM$ in $\sB'$. Let $h(d')$ be the class number of $\sB'$. Let $I_1, I_2, \dots, I_{h(d')}$ 
be representatives of the left ideal classes of $\cM$, and let $\cM_i$ be the right order of $I_i$, $1\leq i\leq {h(d')}$. 
Denote $w_i=\cM_i^\times/\Z^\times$. We have Eichler's formula for the mass of $\sB'$: 
$$
m(d'):=\sum_{i=1}^{h(d')} w_i^{-1}=\frac{1}{12}\prod_{\substack{\ell | d'\\ \ell \text{ prime}}}(\ell-1). 
$$  
 Let $B(p)\in \M_h(\Z)$ be the $p$-th Brandt matrix for $\sB'$, 
and $P(x)$ be the characteristic polynomial of $B(p)$.  
Write $(p+1)=\la_1>\la_2\geq  \dots \geq \la_{h(d')}$ for the eigenvalues of $B(p)$. 

Let 
$$
\G_+=\left\{\gamma \in \left(\cM\otimes \Z[p^{-1}]\right)^\times\ \bigg|\ \ord_p\Nr(\gamma)\in 2\Z \right\}\bigg/ \Z[p^{-1}]^\times. 
$$
The group $\G_+$ acts on the Bruhat-Tits tree $\sT$ of $\SL_2(\Q_p)$. The quotient $G_+:=\G_+\bs \sT_p$ 
is a finite weighted $(p+1)$-regular bipartite graph. 

The Cherednik-Drinfeld uniformization theorem for $X^d_{\Q_p}$ implies that this curve has a semi-stable model over $\Z_p$ 
whose dual graph, as a weighted graph, is $G_+$; see \cite{BC} or \cite{Kurihara}. Therefore, the order 
of the component group $\Phi_{J^d, p}$ of the N\'eron model of $J^d$ at $p$ is $|\Phi_{J^d, p}|=D(G_+)$.   
Now the same argument as in Section \ref{ss2Appendix}, with the same references to \cite{Kurihara}, produces the following 
formula:
$$
|\Phi_{J^d, p}|=\frac{|P(-p-1)P'(p+1)|}{2m(d')}\frac{\prod_{e\in E(G_+)^\ast}w(e)}{\prod_{v\in V(G_+)}w(v)}. 
$$
The only small difference from the function field case is that now the weights $w(v)$, $w(e)$ for $v\in V(G_+)$ and $e\in E(G_+)$ are more varied, 
in the sense that $w(v)$ can be $1,2,3,6,12$, and $w(e)$ can be $1, 2,3$. We consider three separate cases:

Case 1: $d'=2$ (equiv. $d=2p$). In this case, $h(d')=1$, $m(d')=1/12$, $P(x)=x-(p+1)$, and $G_+$ has two vertices of weight $12$.  
By Proposition 4.2 in \cite{Kurihara}, the number of edges of weight $2$ is $\frac{1}{2}\left(1+\left(\frac{-4}{p}\right)\right)$, 
and the number of edges of weight $3$ is $\left(1+\left(\frac{-3}{p}\right)\right)$. 
(Here $\left(\frac{\ast}{p}\right)$ is the Legendre symbol.)
Therefore
\begin{equation}\label{eqJL1}
|\Phi_{J^d, p}|=(p+1)2^{\frac{1}{2}\left(-3+\left(\frac{-4}{p}\right)\right)}3^{\left(\frac{-3}{p}\right)}. 
\end{equation}

Case 2: $d'=3$ (equiv. $d=3p$). In this case, $h(d')=1$, $m(d')=1/6$, $P(x)=x-(p+1)$, and $G_+$ has two vertices of weight $6$.
By Proposition 4.2 in \cite{Kurihara}, the number of edges of weight $2$ is $\left(1+\left(\frac{-4}{p}\right)\right)$, 
and the number of edges of weight $3$ is $\frac{1}{2}\left(1+\left(\frac{-3}{p}\right)\right)$. Therefore
\begin{equation}\label{eqJL2}
|\Phi_{J^d, p}|=(p+1)2^{\left(\frac{-4}{p}\right)}3^{\frac{1}{2}\left(-1+\left(\frac{-3}{p}\right)\right)}. 
\end{equation}

Case 3: $d'\geq 5$. In this case, the vertices of $G_+$ have weights $1, 2$, or $3$. Moreover, by \cite[p. 291]{Kurihara}, the number of 
vertices of weight $2$ (resp. $3$) is $2h_2$ (resp. $2h_3$), where 
$$
h_2=\frac{1}{2}\prod_{\ell |d'}\left(1-\left(\frac{-4}{\ell}\right)\right),
\qquad 
h_3=\frac{1}{2}\prod_{\ell |d'}\left(1-\left(\frac{-3}{\ell}\right)\right). 
$$
By \cite[Prop. 4.2]{Kurihara} and the argument in Section \ref{ss2Appendix}, the number of edges (modulo the orientation) of weight $2$ (resp. $3$) 
in $G_+$ is $h_2\left(1+\left(\frac{-4}{p}\right)\right)$  (resp. $h_3\left(1+\left(\frac{-3}{p}\right)\right)$). 
Therefore, 
$$
\frac{\prod_{e\in E(G_+)^\ast}w(e)}{\prod_{v\in V(G_+)}w(v)}=\frac{2^{h_2\left(1+\left(\frac{-4}{p}\right)\right)} 
3^{h_3\left(1+\left(\frac{-3}{p}\right)\right)}}{2^{2h_2}3^{2h_3}}=\frac{1}{2^{n_2}3^{n_3}},
$$
where 
$$
n_2=\frac{1}{2}\prod_{\ell |d}\left(1-\left(\frac{-4}{\ell}\right)\right),
\qquad 
n_3=\frac{1}{2}\prod_{\ell |d}\left(1-\left(\frac{-3}{\ell}\right)\right). 
$$
We get 
\begin{equation}\label{eqJL3}
|\Phi_{J^d, p}|=\frac{|P(-p-1)P'(p+1)|}{2m(d')2^{n_2}3^{n_3}}=\frac{p+1}{m(d')2^{n_2}3^{n_3}}\prod_{i=2}^{h(d')} (p+1-\la_i)(p+1+\la_i). 
\end{equation}

\begin{rem}\label{lastrem}
Note that our formulas (\ref{eqJL1}), (\ref{eqJL2}) and (\ref{eqJL3}) differ slightly from the formula 
in Theorem 2.3 of \cite{JLComGrp}. The formula in \cite{JLComGrp} is not correct as stated, which is a result 
of a subtraction mistake on line 2 of page 235 in \cite{JLComGrp}. 
\end{rem}



\end{document}